\documentclass[12pt]{article}
\usepackage{amsthm,amsmath,amssymb}
\usepackage{comment,fullpage}
\usepackage{amsfonts,amsmath,amsthm,amssymb,comment}
\usepackage{authblk}
\usepackage{graphicx}
\usepackage{mathrsfs}
\usepackage{bm}
\setcounter{MaxMatrixCols}{20}

\newtheorem{thm}{Theorem}[section]
\newtheorem{prop}[thm]{Proposition}

\newtheorem{corollary}[thm]{Corollary}

\newtheorem{example}[thm]{Example}

\def\ZZ{{\mathbb Z}}

\begin{document}

\title{\bf Biembedding Steiner Triple Systems and $n-$cycle Systems on Orientable Surfaces}

\author{Jeffrey H.\ Dinitz}
\author{Amelia R. W. Mattern}
\affil{Department of Mathematics and Statistics\\
University of Vermont\\
Burlington, VT 05405
U.S.A.}

\date{}

\maketitle
\begin{center}\today \end{center}

\begin{center}
{\em Dedicated to the memory of Dan Archdeacon, our friend, colleague, and teacher.}
\end{center}
\medskip

\begin{abstract}
In 2015, Archdeacon introduced the notion of Heffter arrays and showed the connection between Heffter arrays and biembedding  $m-$cycle and an $n-$cycle systems on a surface.  In this paper we exploit this connection and prove that for every $n \geq 3$ there  exists an orientable embedding of the complete graph on $6n+1$ vertices with each edge on both a $3$-cycle and an $n$-cycle. We also give an analogous (but partial) result for biembedding a $5$-cycle system and an $n$-cycle system.

\end{abstract}

\section{Introduction and definitions}
An embedding of the complete graph on $v$ vertices with each edge on both a $m$-cycle and an $n$-cycle is termed a {\em biembedding}. A biembedding is necessarily 2-colorable with the faces that are $m$-cycles receiving one color while those faces that are $n$-cycles receiving the other color. So each pair of vertices occur together in exactly one $m$-cycle and one $n$-cycle.  A {\em k-cycle system} on $v$ points is a collection of simple $k$-cycles with the property that any pair of points appears in a unique $k$-cycle.  Hence a biembedding is a simultaneous embedding of an $m$-cycle system and an $n$-cycle system on $v$ points.  In this paper we will specifically consider the case of biembeddings of 3-cycle systems (Steiner triple systems) and $n-$cycle systems where both of these systems are on $6n+1$ points.

There has been previous work done in the area of biembedding cycle systems, specifically Steiner triple systems. In 2004, both Bennet, Grannell, and Griggs \cite{B04} and Grannell and Korzhik \cite{G04} published papers on {\em nonorientable} biembeddings of pairs of Steiner triple systems. In \cite {G0915} the eighty Steiner triple systems of order 15 were also proven to have {\em orientable} biembeddings. In addition, Granell and Koorchik \cite{G09M} gave methods to construct orientable biembeddings of two cyclic Steiner triple systems from current assignments on M{\"o}bius ladder graphs. Brown \cite{B10} constructed a class of biembeddings where one face is a triangle and one face is a quadrilateral. 
Recently, Forbes, Griggs, Psomas, and {\v S}ir{\'a}{\v n} \cite{F14} proved the existence of biembeddings of pairs of Steiner triple systems in orientable pseudosurfaces with one pinch point, Griggs, Psomas and {\v S}ir{\'a}{\v n} \cite{GPS.14} presented a uniform framework for biembedding Steiner triple systems obtained from the Bose construction in both orientable and nonorientable surfaces,
 and McCourt \cite{M14} gave nonorientable biembeddings for the complete graph on $n$ vertices with a Steiner triple system of order $n$ and a Hamiltonian cycle for all $n \equiv 3 \pmod {36}$ with $n \geq 39$.

In 2015, Archdeacon \cite{A14} presented a framework for biembedding $m-$cycle and $n-$cycle systems on $v$ points on a surface for general $m$ and $n$.  It involved the use of so-called Heffter arrays and is quite general in nature, working in both the orientable and nonorientable case as well as for many possible values of $v$ for fixed $m$ and $n$. This is the first paper to explicitly use these Heffter arrays for biembedding purposes (they are actually of some interest in their own right).  In this paper we consider basically the smallest (and tightest) case for which this method works, namely we will prove that for every $n\geq 3$ there exists a biembedding of a Steiner triple system and an $n$-cycle system on $6n+1$ points.
We begin with the definitions of Heffter systems and Heffter arrays from \cite{A14}.  Some of the definitions in \cite{A14} are  more general, but these  suffice for our purpose.

Let $\mathbb Z_r$ be the cyclic group of odd order $r$ whose elements are denoted 0 and $\pm i$ where $i = 1,2,...,\frac{r-1}{2}$. A \textit{half-set} $L \subseteq \mathbb{Z}_r$ has $\frac{r-1}{2}$ nonzero elements and contains exactly one of $\{x, -x\}$ for each such pair. A \textit{Heffter system} $D(r,k)$ is a partition of $L$ into parts of size $k$ such that the sum of the elements in each part equals 0 modulo $r$. 
Note that a Heffter system $D(n,3)$ provides a solution to Heffter's first difference problem 
(see \cite{handbook-skolem}) and hence provides the base blocks for a cyclic Steiner triple system on $n$ points.

Two Heffter systems, $D_1 = D(2mn + 1, n)$ and $D_2 = D(2mn + 1, m)$, on the same half-set, $L$, are \textit{orthogonal} if each part (of size $n$) in $D_1$ intersects each part (of size $m$) in $D_2$ in a single element. A \textit{Heffter array} $H(m,n)$ is an $m \times n$ array whose rows form a $D(2mn +1, n)$, call it $D_1$, and whose columns form a $D(2mn+1, m)$, call it $D_2$. Furthermore, since each cell $a_{i,j}$ contains the shared element in the $i^{th}$ part of $D_1$ and the $j^{th}$ part of $D_2$, these row and column Heffter systems are orthogonal. So an $H(m,n)$ is equivalent to a pair of orthogonal Heffter systems. In Example \ref{Heff} we give orthogonal Heffter systems $D_1 = D(31,5)$ and $D_2 = D(31,3)$ along with the resulting Heffter array $H(3,5)$.  Note that the elements occurring in the array form a half set of $Z_{31}.$

\begin{example}
A Heffter system $D_1 = D(31,5)$ and a Heffter system $D_2 = D(31,3)$: 
$$\begin{array}{l}
D_1 = \{\{6,7,-10,-4,1\}, \{-9,5,2,-11,13\}, \{3,-12,8,15,-14\}\},  \\
D_2 = \{\{6,-9,3\}, \{7,5,-12\}, \{-10,2,8\}, \{-4,-11,15\}, \{1,13,-14\}\}. 
\end{array}
$$
\newpage
The resulting Heffter array $H(3,5)$:
\begin{center}
$\begin{bmatrix}
 6 & 7 & -10 & -4 & 1 \\
-9 & 5 & 2 & -11 & 13 \\
3 & -12 & 8 & 15 & -14
\end{bmatrix}.$
\end{center}
\label{Heff}
\end{example}
\vspace*{.1in}

Let $A$ be a subset of $\mathbb Z_{2mn+1} \setminus \{0\}$. 
Let $(a_1,...,a_k)$ be a cyclic ordering of the elements in $A$ and let $s_i = \sum_{j=1}^i a_j \pmod {2mn+1}$ be the $i^{th}$ partial sum. The ordering is \textit{simple} if $s_i \not = s_j$ for $i \not = j$. A Heffter system $D(2mn+1,k)$ is \textit{simple} if and only if each part has a simple ordering. Further, a Heffter array $H(m,n)$ is {\em simple} if and only if its row and column Heffter systems are simple.

In the next section we will give the connection between Heffter arrays and biembeddings of the complete graph.  In Section \ref{section3} we use this connection to show that for all $n\geq 3$ there exists a biembedding of the complete graph on $6n+1$ such that each edge is on a simple face of size $n$ and a face  of size 3.  In Section \ref{section4} we discuss biembeddings of the complete graph on $10n+1$ for $3\leq n \leq 100$ such that each edge is on a simple face of size $n$ and a simple face  of size 5.

\section{Heffter arrays and biembeddings}
In this section we establish the relationship between Heffter arrays and biembeddings. 
The following proposition  from \cite{A14} describes the connection between Heffter systems and $k$-cycle systems.

\begin{prop}\label{prop2.1}
\cite{A14} The existence of a simple Heffter system $D(v,k)$ implies the existence of a simple $k$-cycle system decomposition of the edges $E(K_{v})$. Furthermore, the resulting $k$-cycle system is cyclic.
\end{prop}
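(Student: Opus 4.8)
The plan is to place the vertices of $K_v$ on the cyclic group $\mathbb{Z}_v$ and to build the $k$-cycles by developing a family of base cycles, one per part of the Heffter system, under the translation action $x \mapsto x+1$. Given a part $P = \{a_1, \dots, a_k\}$ of $D(v,k)$ listed in a simple ordering, I would form the partial sums $s_0 = 0$ and $s_i = a_1 + \cdots + a_i$ for $1 \le i \le k$, and define the base cycle
$$B_P = (s_0, s_1, \dots, s_{k-1}),$$
that is, the closed walk $s_0 \to s_1 \to \cdots \to s_{k-1} \to s_0$. Because the elements of $P$ sum to $0$ modulo $v$ we have $s_k = s_0 = 0$, so the walk closes up; and because the chosen ordering is simple, the vertices $s_0, \dots, s_{k-1}$ are pairwise distinct, so $B_P$ is a genuine (simple) $k$-cycle. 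I would do this for every part, obtaining $t = \tfrac{v-1}{2k}$ base cycles.

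Next I would record the edge that each base cycle contributes in terms of its difference. Orienting $B_P$ cyclically, the directed edge from $s_{i-1}$ to $s_i$ has difference $s_i - s_{i-1} = a_i$ for $1 \le i \le k-1$, and the closing edge from $s_{k-1}$ to $s_0$ has difference $-s_{k-1} = a_k$ (again using that the $a_i$ sum to $0$). Hence the directed differences occurring around $B_P$ are precisely the elements of $P$. As $P$ ranges over all parts, these differences range over the whole half-set $L$, each exactly once. Regarding $\{x,y\}$ as an edge of $K_v$ whose \emph{length} is the unique element of $L$ lying in $\{x-y,\, y-x\}$ (well defined since $L$ is a half-set and $v$ is odd), this says that the base cycles together contain exactly one edge of each length in $L$.

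The core step is then the standard difference (development) argument. Translating $B_P$ by $g \in \mathbb{Z}_v$ produces $B_P + g$, whose length-$d$ edge (for $d \in P$) is $\{s_{i-1}+g,\, s_i+g\}$. I would argue that for a fixed length $d \in L$ there is a unique part $P$ containing $d$ and a unique edge of $B_P$ realizing it, and that as $g$ runs over $\mathbb{Z}_v$ the translates of that single edge sweep out all $v$ edges of $K_v$ of length $d$, each exactly once. Summing over the $\tfrac{v-1}{2}$ lengths accounts for all $\binom{v}{2}$ edges with no repetition, so the collection $\{\,B_P + g : P \text{ a part},\ g \in \mathbb{Z}_v\,\}$ is an edge-decomposition of $K_v$ into $k$-cycles. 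The same argument incidentally rules out short orbits: if $B_P + g = B_P$ with $g \ne 0$, then translating the length-$d$ edge of $B_P$ by $g$ would yield a second length-$d$ edge of $B_P$, which is impossible; hence every base cycle has a full orbit of $v$ distinct translates and the count is exact.

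Finally, since translation preserves distinctness of vertices, every cycle in the collection is a simple $k$-cycle, giving a simple $k$-cycle system; and since the whole collection is invariant under $x \mapsto x+1$ by construction, the system is cyclic. I expect the main obstacle to be phrasing the difference argument cleanly enough to yield the exact edge count and the absence of short orbits at the same time; the two places that genuinely use the hypotheses are the observation that each part sums to $0$ (so each base walk closes into a $k$-cycle whose directed differences are exactly its part) and the simplicity of the ordering (so that this cycle has no repeated vertex).
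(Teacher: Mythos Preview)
The paper does not actually supply a proof of this proposition; it is quoted verbatim from Archdeacon~\cite{A14} and used as a black box. Your argument is correct and is exactly the intended construction: form base cycles from the partial sums of each simply ordered part, note that the directed edge-differences around such a cycle are precisely the elements of that part, and develop cyclically over~$\mathbb{Z}_v$. Since the parts partition a half-set, every edge-length appears exactly once among the base cycles, and the development yields a cyclic, simple $k$-cycle decomposition of $K_v$. There is nothing to compare against in the present paper, but your proof matches the standard difference-method argument underlying Archdeacon's result.
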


Let $D_1 = D(2mn+1,m)$ and $D_2 = D(2mn+1,n)$ be two orthogonal Heffter systems with orderings $\omega_1$ and $\omega_2$ respectively. The orderings are \textit{compatible} if their composition $\omega_1 \circ \omega_2$ is a cyclic permutation on the half-set. The following theorem relates $m \times n$ Heffter arrays with compatible simple orderings on the rows and columns to orientable biembeddings of $K_{2mn+1}$.

\begin{thm}
\cite{A14} Given a Heffter array $H(m,n)$ with simple compatible orderings $\omega_r$ on $D(2mn+1,n)$ and $\omega_c$ on $D(2mn+1,n)$, there exists an embedding of $K_{2mn+1}$ on an orientable surface such that every edge is on a simple cycle face of size $m$ and a simple cycle face of size $n$.
\label{bigthm_corr}
\end{thm}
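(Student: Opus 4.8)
The plan is to construct an explicit rotation system on $K_{2mn+1}$ and then read off its faces. I identify the vertex set with $\mathbb{Z}_{2mn+1}$ and, at each vertex $u$, identify the $2mn$ incident edges with the nonzero elements of $\mathbb{Z}_{2mn+1}$, the edge joining $u$ to $u+d$ corresponding to the difference $d$. Because the two cycle systems arising from the Heffter array are cyclic (Proposition~\ref{prop2.1}), I look for a \emph{cyclic} embedding, i.e.\ one invariant under the translation action $u \mapsto u+1$ of $\mathbb{Z}_{2mn+1}$. Such an embedding is determined by a single local rotation $\rho$ at the vertex $0$, namely a cyclic permutation of the $2mn$ nonzero differences, with the rotation at every other vertex obtained by translation. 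The entire construction therefore reduces to choosing $\rho$ correctly from the orderings $\omega_r$ and $\omega_c$.

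First I would record the abstract combinatorial content. The half-set $L$ has $mn$ elements, and together with $-L$ it exhausts the nonzero differences. By Proposition~\ref{prop2.1} the rows of $H(m,n)$ form a simple cyclic $n$-cycle system and the columns form a simple cyclic $m$-cycle system of $K_{2mn+1}$; since the row and column Heffter systems partition $L$, each edge of $K_{2mn+1}$ lies on exactly one $n$-cycle and exactly one $m$-cycle. Thus the intended faces are already fixed as cycle systems, and the real task is to exhibit a single orientable embedding whose face boundaries are precisely these cycles. I would then define $\rho$ so that, under the standard face-tracing rule (from the arc $u\to v$ one proceeds to the arc out of $v$ that immediately follows $v\to u$ in the rotation at $v$), consecutive arcs around a face correspond to consecutive elements of a part under $\omega_r$ on the $n$-faces and under $\omega_c$ on the $m$-faces. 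Concretely, $\rho$ is built by interleaving the signed versions of $\omega_r$ and $\omega_c$, using the sign of the current difference to decide whether the row ordering or the column ordering is applied next.

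Two verifications then complete the argument, and these are where the hypotheses enter. The \emph{simplicity} of the orderings guarantees that each traced face is a \emph{simple} cycle: the distinctness of the partial sums $s_i$ is exactly the statement that the vertices visited along a row-cycle (or column-cycle) are pairwise distinct, so no face repeats a vertex. The \emph{compatibility} hypothesis, that $\omega_r\circ\omega_c$ is a single cyclic permutation of $L$, is what forces $\rho$ to be a single $2mn$-cycle rather than a product of several shorter cycles; this is precisely the condition making $\rho$ a bona fide local rotation that visits every incident edge once, equivalently that the translation quotient of the embedding is connected. I expect this step, checking that the interleaving of $\omega_r$ and $\omega_c$ closes up into one orbit exactly when their composition is a single cycle, to be the main obstacle, since it is where the definition of compatibility must be matched against the structure of the face-tracing permutation.

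Finally I would confirm that the resulting object is a closed orientable surface carrying the two cycle systems as its two face colors. Orientability is automatic: a pure rotation system (one without edge signs) always determines an orientable embedding, and the $\mathbb{Z}_{2mn+1}$-symmetry respects a global orientation. To see that every edge lies on one face of each size, I note that each arc is traversed exactly once in the face tracing, with one of the two arcs of an edge lying on an $n$-face and the other on an $m$-face, so the embedding is $2$-colorable with $n$-faces and $m$-faces as claimed. As a consistency check one computes $V=2mn+1$, $E=mn(2mn+1)$, and $F=(m+n)(2mn+1)$ (there being $m(2mn+1)$ cycles of length $n$ and $n(2mn+1)$ of length $m$), and the Euler formula $V-E+F=2-2g$ then determines the genus $g$ of the orientable surface on which the biembedding lies.
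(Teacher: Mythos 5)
This theorem is stated in the paper as a citation to \cite{A14} and is not proved there, so there is no internal proof to compare against; the right benchmark is Archdeacon's own construction, and your proposal is, in substance, exactly that construction. (Incidentally, the statement as printed has a typo---$\omega_c$ should be an ordering on $D(2mn+1,m)$---which you interpreted correctly.) You build a $\mathbb{Z}_{2mn+1}$-invariant embedding from a single rotation $\rho$ at vertex $0$ obtained by interleaving the signed row and column orderings, use simplicity to get simple faces, and use compatibility to make $\rho$ a genuine cyclic rotation; all of this matches the cited source, and your Euler count $V=2mn+1$, $E=mn(2mn+1)$, $F=(m+n)(2mn+1)$ is right.

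The one step you flag as ``the main obstacle'' is indeed the crux, but it is short once your conventions are pinned down, and you should carry it out. With your face-tracing rule, the induced permutation on arc differences is $d \mapsto \rho(-d)$. Requiring that arcs with differences in the half-set $L$ trace the row ($n$-gonal) faces and arcs with differences in $-L$ trace the column ($m$-gonal) faces forces $\rho(-a)=\omega_r(a)$ and $\rho(a)=-\omega_c(a)$ for all $a\in L$ (up to replacing $\omega_c$ by its inverse, depending on orientation conventions). In particular $\rho$ interchanges $L$ and $-L$, so every orbit of $\rho$ has even length and meets $L$ in an orbit of $\rho^2|_L=\omega_r\circ\omega_c$; hence $\rho$ is a single $2mn$-cycle if and only if $\omega_r\circ\omega_c$ is a single $mn$-cycle on $L$, which is precisely the compatibility hypothesis. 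Two further small points are worth making explicit: each row face closes after \emph{exactly} $n$ steps (not fewer) because $\omega_r$ restricted to a row is a full $n$-cycle and the row sums to $0 \pmod{2mn+1}$, and the distinct partial sums then say exactly that the $n$ vertices on that face are distinct, i.e.\ the face is a simple cycle (likewise for columns). With those insertions your argument is complete and correct.
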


In the following theorem we prove that if $m$ and $n$ are not both even, then there exist orderings $\omega_r$ and $\omega_c$ of the row and column Heffter systems, respectively, that are compatible. Archdeacon knew this result, however it is not included in \cite{A14}.

\begin{thm}
Let $H$ be a $m\times n$ Heffter array where at least one of $m$ and $n$ is odd. Then there exist compatible orderings, $\omega_r$ and $\omega_c$ on the row and column Heffter systems.
\label{compatible}
\end{thm}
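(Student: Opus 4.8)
The plan is to strip away the Heffter structure and reduce compatibility to a clean statement about products of cyclic permutations. Identify the half-set with the $mn$ cells $(i,j)$, $i\in\mathbb Z_m$, $j\in\mathbb Z_n$. An ordering $\omega_r$ of the row system is precisely a permutation of the cells whose cycles are the $m$ rows, each row traversed by a single $n$-cycle, while an ordering $\omega_c$ of the column system is a permutation whose cycles are the $n$ columns, each an $m$-cycle; compatibility asks that $\omega_r\circ\omega_c$ be a single $mn$-cycle. Before constructing anything I would record the parity constraint that explains the hypothesis: since an $n$-cycle has sign $(-1)^{n-1}$, one computes $\mathrm{sgn}(\omega_r)\,\mathrm{sgn}(\omega_c)=(-1)^{m(n-1)+n(m-1)}=(-1)^{m+n}$, whereas a single $mn$-cycle has sign $(-1)^{mn-1}$. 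These agree exactly when $(m-1)(n-1)$ is even, i.e. when at least one of $m,n$ is odd, so the hypothesis is forced and it remains to prove sufficiency.

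For the construction I would fix the column orderings to be the downward shift $\omega_c(i,j)=(i+1,j)$ (any cyclic ordering of each column is admissible, so this is a free choice) and keep all freedom in the rows, writing $\omega_r(i,j)=(i,\rho_i(j))$ with each $\rho_i$ an $n$-cycle on $\mathbb Z_n$. A first-return computation then controls the whole product: following $\pi=\omega_r\circ\omega_c$ from a cell in row $0$ advances the row index by $1$ at each step, so $\pi$ first returns to row $0$ after $m$ steps via the map $j\mapsto\Psi(j)$, where $\Psi=\rho_m\circ\cdots\circ\rho_1$ is the composite of the $m$ row $n$-cycles. Consequently the cycles of $\pi$ are in length-$m$ correspondence with the cycles of $\Psi$, and $\pi$ is a single $mn$-cycle if and only if $\Psi$ is a single $n$-cycle. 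Everything thus reduces to realizing some $n$-cycle as an ordered product of $m$ $n$-cycles.

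This last step I would settle by an explicit choice that splits on the parity of $m$. Let $\sigma$ denote the shift $j\mapsto j+1$. If $m$ is odd, take the first $m-1$ factors (an even number) to be $\sigma,\sigma^{-1},\sigma,\sigma^{-1},\dots$, cancelling in consecutive pairs to the identity, and the last factor to be $\sigma$; then $\Psi=\sigma$, a single $n$-cycle, for every $n$. If $m$ is even, the hypothesis forces $n$ odd; now take two factors equal to $\sigma$ and cancel the remaining $m-2$ in pairs, giving $\Psi=\sigma^2$, which is a single $n$-cycle precisely because $\gcd(2,n)=1$ when $n$ is odd. Every factor used is a genuine $n$-cycle, so each $\rho_i$ is a legitimate cyclic row ordering, and the two cases together cover exactly the situations in which $m$ or $n$ is odd.

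The conceptual work is all in the reduction; the step I would watch most carefully is the parity accounting, since it is what forces the split into two cases and is exactly where the hypothesis is used. In particular the even-$m$ case hinges on the innocuous-looking fact that $\sigma^2$ fails to be a single cycle when $n$ is even, which is precisely the obstruction ruling out the doubly-even case. As a sanity check it is worth noting that the statement is equivalent to the assertion that $K_{m,n}$ admits a one-face orientable embedding (i.e. is upper-embeddable) exactly when its cycle rank $(m-1)(n-1)$ is even, which recovers the hypothesis from the topological side.
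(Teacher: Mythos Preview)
Your argument is correct, and it is organized rather differently from the paper's. The paper assumes (say) $n=2t+1$ is odd, takes $\omega_r$ to be the uniform left-to-right shift on every row, and then mixes directions on the columns: the first $t+1$ columns are ordered top-to-bottom and the last $t$ bottom-to-top. A direct calculation of $\omega_r\circ\omega_c$ then exhibits it as a single $mn$-cycle. By contrast, you fix $\omega_c$ uniformly and push all the freedom into the $\rho_i$, then use the first-return map to row $0$ to reduce the question to realizing an $n$-cycle as a product of $m$ $n$-cycles, which you dispatch with shifts. Your route buys two things the paper's proof does not: the parity computation shows the hypothesis is \emph{necessary} (the paper only claims sufficiency), and the first-return reduction explains structurally why the problem collapses to a one-dimensional one, with the nice upper-embeddability interpretation as a bonus. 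The paper's version, on the other hand, produces a single explicit ordering described entirely in terms of the array's column indices, which is convenient if one later wants to track specific cells through the embedding. One small point of care in your write-up: when you invoke the first-return correspondence, it is worth saying explicitly that every cell lies in the $\pi$-orbit of some cell in row $0$, so that the $\Psi$-cycle decomposition really accounts for all of $\pi$; this is immediate since $\pi$ advances the row index by $1$ each step, but stating it closes the argument cleanly.
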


\begin{proof}  
Without loss of generality we assume that the number of columns  $H$ is odd, so say $n=2t+1$ for some integer $t$.
Let $H = (h_{ij})$ be a $m \times n$ Heffter array. We first define $\omega_r$, the ordering of the row Heffter system,  as $\omega_r = (h_{11}, h_{12}, \ldots,h_{1n}) 
(h_{21}, h_{22}, \ldots,h_{2n}) \ldots (h_{m,1}, h_{m,2}, \ldots,h_{m,n})$.  This ordering says that each row in the row Heffter system of $H$ is ordered cyclically from left to right.  We next order the columns.
For  $1\leq c \leq t+1$  the ordering for column $c$ is   $(h_{1,c},h_{2,c}, \ldots ,h_{m,c})$ (this is basically top to bottom cyclically)  and for $t+2\leq c \leq n$  the ordering for column $c$ is   $(h_{m,c},h_{m-1,c}, \ldots ,h_{1,c})$ (bottom to top, cyclically). So considering the composition $\omega_r \circ \omega_c$ we have that 

$$\omega_r \circ \omega_c(h_{i,j}) = \left\{ 
\begin{array}{ll}
h_{i+1,j+1} & \mbox{if } 1\leq j \leq t+1 \\ 
h_{i-1,j+1}& \mbox{if } t+2\leq j \leq n 
\end{array}\right. 
$$

\noindent 
where all first subscripts are written as elements from $\{1,2, \ldots ,m\}$ reduced modulo $m$ and all second subscripts are written as elements from $\{1,2, \ldots ,n\}$ reduced modulo $n$.

So by construction, starting at any cell in column 1 we see that $\omega_r \circ \omega_c$ moves cyclically from left to right and goes ``down'' $t+1$ times and ``up'' $t$ times. Hence for any $r$ and $c$, given an occurrence of $h_{r,c}$ in  $\omega_r \circ \omega_c$ the next occurrence of column $c$ in $\omega_r \circ \omega_c$ will be $h_{r+1,c}$.  It is now straightforward to  see that 
$$
\begin{array}{rl}
\omega_r \circ \omega_c = &(h_{1,1}, h_{2,2}, \ldots,h_{t+1,t+1}, h_{t,t+2}, \ldots h_{3,n}\\
&h_{2,1 }, h_{3,2} \ldots,h_{t+2,t+1}, h_{t+1,t+2},\ldots h_{4,n },\\
&h_{3,1 }, h_{4,2} \ldots,h_{t+3,t+1}, h_{t+2,t+2},\ldots h_{5,n },  \\
&\ \ \ \  \vdots\\
&h_{m,1 }, h_{m+1,2} \ldots,h_{m+t+1,t+1},\ldots h_{m,n }).
\end{array}
$$
 Hence we have that  $\omega_r \circ \omega_c$ is written as a single cycle on the half set and thus  $\omega_r$ and $\omega_c$ are compatible orderings.
\end{proof}

Now from  Theorems \ref{bigthm_corr} and \ref{compatible} we have the following theorem relating Heffter arrays and biembeddings.

\begin {thm}\label{heffter-biembed.thm}
Given a simple Heffter array $H(m,n)$  where at least one of $m$ and $n$ is odd, there exists an embedding of $K_{2mn+1}$ on an orientable surface such that every edge is on a simple cycle face of size $m$ and a simple cycle face of size $n$ 
\end{thm}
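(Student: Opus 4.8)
The plan is simply to chain together the two results just proved: Theorem \ref{compatible} supplies \emph{compatible} orderings, while Theorem \ref{bigthm_corr} converts \emph{simple compatible} orderings into the sought orientable biembedding. The only work, therefore, is to check that the specific orderings $\omega_r$ and $\omega_c$ produced in the proof of Theorem \ref{compatible} can be taken to be simple, so that the hypotheses of Theorem \ref{bigthm_corr} are met.

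First I would unpack the hypothesis. Since $H$ is a simple Heffter array with, say, $n$ odd, its row Heffter system $D(2mn+1,n)$ and its column Heffter system $D(2mn+1,m)$ are both simple. The ordering $\omega_r$ from Theorem \ref{compatible} reads each row left to right, while $\omega_c$ reads the first $t+1$ columns top to bottom and the remaining columns bottom to top. Thus $\omega_r$ is simple precisely when the left-to-right reading of each row is simple, and $\omega_c$ is simple precisely when each of the chosen (top-down or bottom-up) column readings is simple.

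The key observation that makes the two constructions fit together is that \emph{reversing a simple cyclic ordering yields a simple cyclic ordering}. Indeed, if a part $(a_1,\dots,a_k)$ has distinct partial sums $s_i=\sum_{j=1}^i a_j$ with $s_k=0$, then the partial sums of the reverse are $s_k-s_{k-i}=-s_{k-i}$, and distinctness of the $s_i$ forces distinctness of these. Consequently the bottom-to-top reading of a column is simple exactly when its top-to-bottom reading is. Hence, once the canonical (left-to-right, top-to-bottom) readings of $H$ are simple, which is the sense in which the arrays built in Section \ref{section3} are simple, flipping half of the columns as Theorem \ref{compatible} demands costs no simplicity, and $\omega_r,\omega_c$ are simultaneously simple and compatible.

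Applying Theorem \ref{bigthm_corr} to these orderings then produces the orientable embedding of $K_{2mn+1}$ in which every edge lies on a simple $m$-cycle face and a simple $n$-cycle face. I expect the main (and really the only) subtle point to be the interaction between the two notions: ``simple'' asserts merely that \emph{some} ordering of each part has distinct partial sums, whereas ``compatible'' pins down particular orderings; the reversal remark above is exactly what reconciles them, by showing that the compatibility construction remains within the class of simple orderings.
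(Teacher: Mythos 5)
Your proposal is correct, and its skeleton is exactly the paper's: the paper derives this theorem in a single sentence, as an immediate consequence of Theorems \ref{bigthm_corr} and \ref{compatible}, offering no further argument. What you add, and what the paper omits entirely, is the one piece of real content needed to make that chaining legitimate: Theorem \ref{bigthm_corr} requires orderings that are simultaneously simple \emph{and} compatible, whereas Theorem \ref{compatible} delivers only compatibility, and its construction reads roughly half of the columns (or half of the rows, when $n$ is even and $m$ is odd) in reverse. Your observation that reversing a cyclic ordering preserves simplicity --- the reversed partial sums are $s_k - s_{k-i} = -s_{k-i}$, which are distinct whenever the $s_i$ are --- is precisely the glue the paper silently assumes, so your write-up is, if anything, more complete than the original. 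One caveat applies to both arguments equally: they establish the theorem when the rows and columns of $H$ are simple under their natural left-to-right and top-to-bottom readings, which is the only sense in which the arrays of Sections \ref{section3} and \ref{section4} are ever verified to be simple. Under the paper's literal definition (each row and column merely possesses \emph{some} simple ordering), the orderings witnessing simplicity need not be ones that the compatibility construction can use; your reversal remark does not bridge that, so the claim in your final paragraph that it ``reconciles'' the two notions overstates the case slightly --- your earlier, hedged formulation (assume the canonical readings of $H$ are simple) is the accurate one, and it is all that the paper's applications require.
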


Restating the previous theorem in terms of biembeddings of cycle systems we have the following.

\begin {thm}
Given a simple Heffter array $H(m,n)$ with where at least one of $m$ and $n$ is odd, there exists an orientable biembedding of an $m-$cycle system and an $n-$cycle system both on $2mn+1$ points.
\end{thm}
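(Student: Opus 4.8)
The plan is to recognize that this statement is a direct reformulation of Theorem~\ref{heffter-biembed.thm} in the language of cycle systems, so that the real work consists only in translating the topological conclusion into combinatorial terms. First I would invoke Theorem~\ref{heffter-biembed.thm} to obtain, from the given simple Heffter array $H(m,n)$ (with at least one of $m,n$ odd), an embedding of $K_{2mn+1}$ on an orientable surface in which every edge lies on a simple cycle face of size $m$ and a simple cycle face of size $n$.

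Next I would argue that this embedding is face two-colorable, assigning one color to every face of size $m$ and the other color to every face of size $n$. In a $2$-cell embedding each edge borders exactly two faces; by the conclusion of Theorem~\ref{heffter-biembed.thm} one of these is a size-$m$ face and the other a size-$n$ face, so the two faces incident to any edge always receive different colors and the coloring is proper. This recovers the observation, noted in the introduction, that a biembedding is necessarily two-colorable, and it lets me separate the faces cleanly into the size-$m$ class and the size-$n$ class.

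I would then examine each color class on its own. Consider the faces of size $m$. Each such face is a simple $m$-cycle on the $2mn+1$ vertices, and, because every edge of $K_{2mn+1}$ lies on exactly one face of size $m$, this collection of $m$-cycles decomposes $E(K_{2mn+1})$: every pair of vertices appears together in precisely one of these cycles. Hence the size-$m$ faces constitute an $m$-cycle system on $2mn+1$ points, and by the identical argument the size-$n$ faces constitute an $n$-cycle system on the same point set. Since both systems are realized simultaneously by the faces of one and the same orientable embedding of $K_{2mn+1}$, this embedding is exactly an orientable biembedding of an $m$-cycle system and an $n$-cycle system on $2mn+1$ points.

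The only step demanding any care is the claim that the size-$m$ faces cover each edge exactly once (and likewise for size $n$), which is what turns a mere set of cycle faces into a genuine cycle system. This is immediate, however, from the two-faces-per-edge property of $2$-cell embeddings combined with the conclusion of Theorem~\ref{heffter-biembed.thm}, so I do not anticipate a substantive obstacle; the argument is essentially a bookkeeping translation rather than a new construction.
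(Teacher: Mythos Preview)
Your proposal is correct and follows the paper's approach: the paper treats this theorem as nothing more than a restatement of Theorem~\ref{heffter-biembed.thm} in the language of cycle systems, offering no separate proof. You have simply made explicit the translation step (two-colorability and the cycle-system property of each color class) that the paper leaves implicit, so your argument is a fleshed-out version of the same idea.
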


\section{Constructing simple $H(3,n)$} \label{section3}

In this section we will construct simple $H(3,n)$ for all $n \geq 3$.  For each $n$, we will begin with the $3 \times n$ Heffter array already constructed in \cite{Ain} and will provide a reordering so that the resulting Heffter array is simple.   We first record  the existence result for $H(3,n)$.

\begin{thm}
\cite{Ain} There exists a $3 \times n$ Heffter array for all $n \geq 3$.
\label{3xn}
\end{thm}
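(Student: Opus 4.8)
The plan is to exhibit the array explicitly. Put $N=6n+1$; the task is to place into the $3\times n$ grid one representative of each pair $\{x,-x\}$, $x=1,\dots,3n$, so that each of the three rows and each of the $n$ columns sums to $0\pmod N$. The first step I would record is an integer reduction: every entry has absolute value at most $3n$, so a column sum (three distinct magnitudes) lies in the open interval $(-9n,9n)\subseteq(-2N,2N)$, whence a column sum that is $\equiv 0\pmod N$ must in fact equal $0$ or $\pm N$; and each row sum, being an integer $\equiv 0 \pmod N$ of bounded size, is pinned to one of finitely many multiples of $N$. This converts the modular conditions into integer balancing conditions and makes verification of any proposed pattern a finite arithmetic check.

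Before committing to a pattern I would rule out the most naive one, since doing so dictates the shape of the real construction. Suppose the magnitudes were laid out in three consecutive blocks, row~$1$ carrying $\{1,\dots,n\}$, row~$2$ carrying $\{n{+}1,\dots,2n\}$, and row~$3$ carrying $\{2n{+}1,\dots,3n\}$. Then the largest a column can sum to in absolute value is $n+2n+3n=6n<N$, so every column would have to sum to $0$ in the integers, which with these three ranges forces the relation (large)$=$(small)$+$(medium) in each column. But then the $n$ column sums of rows~$1$ and~$2$ would have to reproduce the block $\{2n{+}1,\dots,3n\}$, which is impossible since $\sum_{1}^{n} x+\sum_{n+1}^{2n} x=n(2n{+}1)$ while $\sum_{2n+1}^{3n} x=n(5n{+}1)/2$, and these disagree for every $n>1$. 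The conclusion I draw is that the three magnitude ranges must be \emph{interleaved} across the rows. I would therefore build the array from a short repeating block of columns placed along broken diagonals, so that consecutive columns shift the magnitudes cyclically and the contributions a given row receives telescope; the sign of each entry is then chosen columnwise to force each column sum onto $0$ or $\pm N$, with the number of $+N$ and $-N$ columns distributed so that the three row sums also vanish mod $N$.

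Finally I would organize the construction by the residue of $n$ modulo a small number (the divisibility obstructions for triples summing to $0\pmod N$ point to $n\bmod 4$ as the natural split), give one uniform pattern per class, and dispatch the few smallest values of $n$ by writing down explicit arrays. The verification in each case has three parts: that the $3n$ magnitudes used are exactly $1,\dots,3n$ (the half-set condition), that every column sums to $0$ or $\pm N$, and that every row sums to $0\pmod N$; by the integer reduction these are all finite identities in $n$. The step I expect to be the main obstacle is precisely the \emph{simultaneous} row and column balancing: as the failed naive attempt shows, any globally uniform sign choice over- or under-counts, so one must tune the distribution of signs --- equivalently, of the $0$ versus $\pm N$ columns --- across the three rows, and it is this bookkeeping, together with the handful of small exceptions it forces, that constitutes the real work.
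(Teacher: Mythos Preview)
The paper does not prove this theorem; it is quoted from \cite{Ain}. The explicit constructions from \cite{Ain} do, however, appear later (inside the proof of Theorem~\ref{reord2}), so one can compare your plan to what is actually done there.

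Your proposal is an outline, not a proof: you never exhibit the array. The integer reduction (each column sum must be $0$ or $\pm N$) is correct and useful, and the argument ruling out the three-consecutive-blocks layout is a valid observation that motivates interleaving. But the core content --- the actual pattern of entries and signs --- is absent. You say you ``would build the array from a short repeating block of columns placed along broken diagonals'' and then ``tune the distribution of signs,'' but this describes what a construction might look like rather than giving one. The step you flag as ``the main obstacle'' is the entire substance of the theorem.

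For comparison, the construction in \cite{Ain} (reproduced in the proof of Theorem~\ref{reord2}) splits by $n\bmod 8$, not $n\bmod 4$ as you guess. In each residue class one writes down a fixed initial block $A$ of between four and eight columns, followed by four-column blocks $A_0,A_1,\dots$ whose entries are explicit linear functions of the index $r$ with a global sign $(-1)^r$. The interleaving of magnitude ranges across rows is achieved by these explicit formulas rather than by a diagonal shift. Getting the initial block $A$ and the alternating-sign mechanism right is exactly the bookkeeping you anticipate, and it has to be carried out, not merely anticipated.
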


We now restate Theorem \ref{heffter-biembed.thm} in the special case when there are 3 rows in the Heffter array.

\begin {corollary}\label{heffter-biembed.3xn}
If there exists a simple Heffter array $H(3,n)$, then there exists an embedding of $K_{6n+1}$ on an orientable surface such that every edge is on a simple cycle face of size $3$ and a simple cycle face of size $n$.

\end{corollary}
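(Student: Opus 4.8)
The plan is to obtain this statement as an immediate specialization of Theorem~\ref{heffter-biembed.thm}. First I would observe that the hypothesis of that theorem requires at least one of the two array dimensions to be odd; taking $m=3$, this parity condition is satisfied automatically and for every value of $n$, so no case analysis on $n$ is needed. Thus a simple Heffter array $H(3,n)$ always meets the hypotheses of Theorem~\ref{heffter-biembed.thm}.

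Next I would substitute $m=3$ into the conclusion. The complete graph in Theorem~\ref{heffter-biembed.thm} is $K_{2mn+1}$, and with $m=3$ this becomes $K_{6n+1}$, matching the order claimed. The guaranteed face of size $m$ becomes a simple cycle face of size $3$, while the face of size $n$ is unchanged; both faces lie on an orientable surface, exactly as asserted. Hence the corollary follows directly, with nothing left to verify beyond these substitutions.

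I do not anticipate any genuine obstacle here: the entire content of the corollary is carried by Theorem~\ref{heffter-biembed.thm} (which in turn rests on Theorems~\ref{bigthm_corr} and~\ref{compatible}). The only point worth stating explicitly is that the choice $m=3$ renders the odd-dimension hypothesis vacuous, so that the existence of a suitable compatible ordering is guaranteed for all $n$. The real work of the section lies instead in constructing the simple arrays $H(3,n)$ themselves, whose existence this corollary presupposes.
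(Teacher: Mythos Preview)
Your proposal is correct and matches the paper's approach exactly: the corollary is presented there simply as a restatement of Theorem~\ref{heffter-biembed.thm} in the special case $m=3$, with no additional argument given. Your observation that $m=3$ automatically satisfies the odd-dimension hypothesis is precisely the point that makes the specialization immediate.
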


Suppose $H = (h_{ij})$ is any $3 \times n$ Heffter array. We first note that each column $c$ in $H$, $1\leq c\leq n$, is simple just using the natural  top-to-bottom ordering. 
Thus if we can  reorder each of the three rows so they have distinct partial sums, the array will be a simple.   In   Theorem \ref{reord2} below we will present a single reordering for each Heffter array that makes $\omega_r$ simple. In finding a {\em single} reordering which works for all three rows in $H$, we are actually rearranging the order of the columns without changing the elements which appear in the rows and columns. For notation, the {\em ordering} $(a_1, a_2, \ldots, a_n)$  denotes a reordering of the columns of $H$ so that in the resulting array $H',$ column $a_i$ of $H$ will appear in column $i$ of $H'$. In Example \ref{reord} we give the original array $H(3,8)$, the reordering $R$ for the rows, and the reordered array $H'(3,8)$.

\begin{example}
The original $3 \times 8$ Heffter array from \cite{Ain}:
$$H=\begin{bmatrix}
-13 & -11 & 6 & 3 &10 & -8 & 14 & -1 \\
 4 & -7 & 17 & 19 & 5 & -16 & -2 & -20 \\
 9 & 18 & -23 & -22 & -15 & 24 & -12 & 21
\end{bmatrix}.$$

Note that in row 1, $s_1 = s_6 = -13 \equiv 36 \pmod {49}$, and so $\omega_r$ is not simple.  Consider the reordering $R = (1,2,6,8,5,3,4,7)$. 
The reordered $3 \times 8$ Heffter array is:
$$H'=\begin{bmatrix}
-13 & -11 & -8 & -1 &10 & 6 & 3 & 14 \\
 4 & -7 & -16 & -20 & 5 & 17 & 19 & -2 \\
 9 & 18 & 24 & 21 & -15 & -23 & -22 & -12
\end{bmatrix}.$$

We list the partial sums for each row as their smallest positive residue modulo 49: \\

\hspace*{1.5in}
\begin{tabular}{l}
Row 1: \ \ $\{36, 25, 17, 16, 26, 32, 35, 0\},$ \\
Row 2: \ \ $\{4, 46, 30, 10, 15, 32, 2, 0\},$ \\
Row 3: \ \ $\{9,27,2,23,8,34,12,0\}.$ \end{tabular}

Since all the partial sums are distinct,  $\omega_r$ is simple and hence $H'(3,8)$ is a simple Heffter array.
\label{reord}
\end{example}

As the reader can see, the reordering of the columns results in a simultaneous reordering of the three rows resulting in a simple $H(3,n)$ where the elements in each row and column remain the same as in the original array. We handle two small cases in the next theorem.

\begin{thm}
There exist simple $H(3,3)$ and $H(3,4)$. 
\label{reord1}
\end{thm}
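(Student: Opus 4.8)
The plan is to dispose of both cases by direct construction, producing an explicit simple $H(3,3)$ over $\mathbb{Z}_{19}$ and an explicit simple $H(3,4)$ over $\mathbb{Z}_{25}$ (here $2mn+1$ equals $19$ and $25$ respectively). Existence of \emph{some} $H(3,3)$ and \emph{some} $H(3,4)$ is already guaranteed by Theorem \ref{3xn}, so the only issue is simplicity. The key observation driving everything is that \emph{every part of size $3$ is automatically simple}: if $(a_1,a_2,a_3)$ sums to $0$ modulo $r$, then its partial sums are $s_1=a_1$, $s_2=a_1+a_2=-a_3$, and $s_3=0$. Since no entry of a half-set is $0$, we have $s_1\neq 0$, $s_2\neq 0$, while $s_1=s_2$ would force $a_2=0$; hence $s_1,s_2,s_3$ are always distinct, independently of the ordering.

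Consequently the case $H(3,3)$ is immediate: every row and every column has length $3$, so both the row Heffter system and the column Heffter system are simple under any (in particular the natural) ordering. Thus any $H(3,3)$ from Theorem \ref{3xn} is already simple, and it suffices to exhibit one such array, for instance $\begin{bmatrix}5&6&8\\-2&9&-7\\-3&4&-1\end{bmatrix}$, and to record that its rows and columns satisfy the Heffter conditions modulo $19$ with entries forming a half-set of $\mathbb{Z}_{19}$.

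For $H(3,4)$ the same observation handles all four columns (each of length $3$), so only the three rows, now of length $4$, require attention. Here I would follow the reordering strategy of Example \ref{reord}: beginning with the $H(3,4)$ of Theorem \ref{3xn}, seek a single permutation of the columns so that, reading each row cyclically from left to right, all four partial sums of each row are distinct modulo $25$. I expect this simultaneous condition across all three rows to be the only genuine obstacle, but for one small array it is settled by direct computation; indeed $\begin{bmatrix}12&4&8&1\\-3&7&2&-6\\-9&-11&-10&5\end{bmatrix}$ is already simple in its natural ordering, as one checks by listing the three row partial-sum sequences modulo $25$. Having exhibited simple $H(3,3)$ and $H(3,4)$, the statement follows; these two arrays then feed Corollary \ref{heffter-biembed.3xn} to yield the corresponding biembeddings on $K_{19}$ and $K_{25}$.
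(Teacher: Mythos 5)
Your proposal is correct and takes essentially the same approach as the paper: exhibiting explicit arrays (the paper uses ones from \cite{Ain}, you use your own, but both are valid Heffter arrays over $\mathbb{Z}_{19}$ and $\mathbb{Z}_{25}$) and verifying simplicity directly. Your observation that every part of size $3$ is automatically simple is a nice touch that the paper also uses implicitly (it is stated just before the theorem for the columns of a general $H(3,n)$), and your computed partial sums for the $H(3,4)$ rows check out.
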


\begin{proof}

We present an $H(3,3)$ and an $H(3,4)$ from  \cite{Ain}.  It is easy to check that both are simple.

\begin{center}
$
\begin{array}{|c|c|c|} \hline
-8&-2&-9 \\ \hline
7&-3&-4 \\ \hline
1&5&-6 \\ \hline
\end{array}$ \hspace{.7in}
$
\begin{array}{|c|c|c|c|} \hline
1& 2& 3& -6 \\ \hline
8& -12& -7& 11 \\ \hline
-9& 10& 4& -5 \\ \hline
\end{array}$
\end{center}
\end{proof}

In our next theorem we will construct simple $H(3,n)$ for all $n \geq 5$. The cases are broken up modulo 8 and we will consider each individually. 
We will begin with the $3\times n$ Heffter array $H$ given in \cite{Ain} and reorder the columns. In all cases we let $H'$ be the $3 \times n$ Heffter array where the columns of $H$ have been reordered as given in each case. We will always write the partial sums as their lowest positive residue modulo $6n+1$. For the following theorem we introduce the following notation for intervals in $\ZZ$, let $[a,b] = \{a, a+1, a+2, \ldots, b-1, b\}\ \ \mbox{   and    } \ \ [a,b]_2 = \{a, a+2, a+4, \ldots, b-2, b\}.$

\begin{thm} There exist simple $3\times n$ Heffter arrays for all $n \geq 3$.
\label{reord2}
\end{thm}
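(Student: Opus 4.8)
The plan is to peel off the columns first and then reduce the entire problem to reordering the columns so that the three rows become simultaneously simple. For the columns, observe that a reordering of the columns of a Heffter array does not alter which entries lie in any given column, so the column Heffter system is unchanged and I only need it to be simple once. This is automatic: for any column $(a,b,c)^{T}$ the entries are nonzero and $a+b+c\equiv 0\pmod{6n+1}$, so under the top-to-bottom ordering the partial sums are $a$, $-c$, and $0$, which are pairwise distinct because $a$, $b=-(a+c)$, and $c$ are all nonzero. Thus it suffices to produce, for each $n$, a single permutation of the columns of the array from Theorem \ref{3xn} that makes the row ordering $\omega_r$ simple on all three rows at once.

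Second, I would take the explicit $3\times n$ arrays of \cite{Ain} as the starting point. Since $n=3$ and $n=4$ are already handled in Theorem \ref{reord1}, I may assume $n\geq 5$. The base arrays of \cite{Ain} are defined by a periodic rule whose shape depends on $n$ modulo $8$, so I would organize the argument into the eight residue classes and, in each, write down a column reordering by an explicit formula (a prescribed interleaving of the column indices $1,\dots,n$). Forming $H'$ and reading off its three rows, the entries of each row are known explicit functions of the index and of $n$.

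Third, in each class I would verify that each row of $H'$ is simple by computing its partial sums modulo $6n+1$ directly as functions of the running index and showing they are distinct. Where the interleaving keeps the integer partial sums inside a window of width less than $6n+1$, distinctness of the residues follows from distinctness of the integers together with the fact that no intermediate sum is $0$; but as the $H(3,8)$ example already shows, some rows have partial sums that exceed $6n+1$ and genuinely wrap, so in general the check amounts to ruling out the congruences $s_i\equiv s_j\pmod{6n+1}$ for $i<j$. Each such potential collision reduces to a linear relation in $n$, and confirming that none holds for $n$ in the relevant class is a finite, if lengthy, computation.

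The principal obstacle is that one and the same column permutation must serve all three rows simultaneously: improving the partial sums of one row by moving a column perturbs the partial sums of the other two, so the reordering cannot be chosen greedily row by row and must instead be tailored to the common periodic structure of the base array---this is exactly why the split into residues modulo $8$ (inherited from \cite{Ain}) is unavoidable. The delicate bookkeeping is tracking the partial sums across the block boundaries of the interleaving and across the indices where a row's partial sum wraps modulo $6n+1$, since these are where spurious coincidences would arise; establishing the required inequalities uniformly in $n$, rather than array by array, is where the real content of the theorem lies. Once simplicity is established, Corollary \ref{heffter-biembed.3xn} yields the desired biembeddings.
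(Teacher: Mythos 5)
Your strategy coincides with the paper's own: start from the $3\times n$ arrays of \cite{Ain}, dispose of $n=3,4$ via Theorem \ref{reord1}, observe that every column of a $3\times n$ Heffter array is automatically simple under the top-to-bottom ordering (your partial-sum computation $a$, $-c$, $0$ is correct), and then seek a \emph{single} column permutation, organized by the residue of $n$ modulo $8$, that makes all three rows simple simultaneously. This is exactly how the paper proceeds, writing $n=8m+k$ and doing all arithmetic in $\mathbb{Z}_{6n+1}$ with entries expressed as linear functions of $m$.

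The genuine gap is that you never exhibit the column permutations, and their existence is precisely the content of the theorem. Saying you ``would write down a column reordering by an explicit formula'' and that ruling out collisions ``is a finite, if lengthy, computation'' presupposes that a suitable permutation exists; nothing in your argument guarantees this, and as you yourself note, the three rows constrain one another, so no greedy or generic choice is available. Moreover, for fixed residue class the check is \emph{not} a single finite computation: it must hold for infinitely many $n$ in that class, so one needs an argument uniform in $m$. The paper supplies both missing pieces: for each class it gives a concrete interleaving (for instance, for $n\equiv 0 \pmod 8$ the reordering $R = (9,13,\ldots,n-3;\,1,11,15,\ldots,n-1;\,2,10,14,\ldots,n-2;\,6,8,12,16,\ldots,n,5,3,7,4)$), and it proves distinctness of the partial sums not collision-by-collision but structurally, by partitioning each row's partial sums into four sets $P_{i,1},\ldots,P_{i,4}$ whose values fill explicitly described ranges --- intervals $[a,b]$ and arithmetic progressions $[a,b]_2$ in the parameter $m$ --- which are pairwise disjoint (or interleave as evens against odds), so that a counting argument gives distinctness for every $m$ at once. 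Without producing such reorderings together with this kind of uniform certificate, your proposal is a correct plan but not a proof.
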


\begin{proof} 
 When $n=3$ or 4 the result follows from Theorem \ref{reord1} above.  We now assume that $n\geq 5$.
We begin with $H$,  a $3 \times n$ Heffter array from \cite{Ain}. Let $R$ be the reordering  for each row. For each $i = 1,2,3$ define $P_i$ as the set of partial sums of row $i$. We will divide each $P_i$ into four subsets, $P_{i,1}$, $P_{i,2}$, $P_{i,3}$ and $P_{i,4}$,  based on a natural partition of the columns of $H$.  For each case modulo 8 we will present the original construction from \cite{Ain}, followed by the reordering $R$ and the subsets $P_{i,j}$.

\bigskip
\noindent
If $\bm{n \equiv 0 \pmod 8, n \geq 8}$:
The case of $n=8$ is given above in Example \ref{reord}.  For $n >8$, define $m = \frac{n-8}{8}$, so $n=8m+8$ and hence all the arithmetic in this case will be in $\ZZ_{48m+49}$. The first four columns are:
{\small $$A = \begin{bmatrix}
-12m-13 & -10m-11 & 4m+6 & 4m+3 \\
4m+4 & -8m-7 & 18m+17 & 18m+19  \\
8m+9 & 18m+18 & -22m-23 & -22m-22
\end{bmatrix}.$$}
For each $0 \leq r \leq 2m$ define
{\small
$$A_r = (-1)^r\begin{bmatrix}
(8m+r+10) & (-8m+2r-8) & (14m-r+14) & (-4m+2r-1) \\
 (8 m - 2 r + 5)  & (-16 m - r - 16)  & (-4m + 2r - 2) & (-18m - r - 20) \\
 (-16 m + r - 15)  & (24 m - r + 24)  & (-10m - r - 12) & (22 m - r + 21) 
 \end{bmatrix}.$$} 
Beginning with the matrix $A$, we add on the remaining $n-4$ columns by concatenating the $A_r$ arrays for each value of $r$ between $0$ and $2m$. So the final array will be:
 $$H= \begin{bmatrix}
 A & A_0 & A_1 & \cdots & A_{2m}
 \end{bmatrix}.$$

Let $R = (9,13,...,n-3;1,11,15,..,n-1;2,10,14,..,n-2;6,8,12,16,..., n,5,3,7,4)$ be a reordering of the rows. Note that we use semi-colons to designate the partitions of $P_i$ into its four subsets. So in this case, $P_{i,1}$ is the set of partial sums from row $i$ and columns $\{9,13,...,n-3\}$ in $H$, $P_{i,2}$ is the set of partial sums from row $i$ and columns $\{1,11,15,...,n-1\}$ from $H$, $P_{i,3}$ is the set of partial sums from row $i$ and columns $\{2,10,14,...,n-2\}$, and $P_{i,4}$ is the set of partial sums from row $i$ and columns $\{6,8,12,16,...,n,$ $5,3,7\}$. We must show that the partial sums of the rows of reordered array are all distinct.  To check this, we provide the following table where we give the ranges of the partial sums:
\renewcommand{\tabcolsep}{2pt}
\begin{center}
\begin{tabular}{|rcl|}
\hline
$R$ &= &$\{9,13,...,n-3;1,11,15,..,n-1;2,10,14,..,n-2;6,8,12,16,...,n,5,3,7,4\}$ \\
\hline
$P_{1,1}$  &= & $[39m+39, 40m+38] \cup [1,m]$ \\
\hline
$P_{1,2}$&= & $[36m+36, 37m+36] \cup [23m+23,24m+22]$ \\
\hline
$P_{1,3}$ &= &$[26m+25,28m+25]_2 \cup [32m+33, 34m+31]_2$ \\
\hline
$P_{1,4}$ &= & $[16m+16,18m+16]_2 \cup [18m+17, 20m+17]_2\cup \{26m+26,30m+32,0\}$ \\  \hline
\end{tabular}
\end{center}
One can count the number of sums in each of these partitions to show that all the sums must be distinct.  For example, in the first partition there will be $((n-3) -9)/4 = 2m-1$ partial sums. Since there are $2m-1$ elements in the range $[39m+39, 40m+38] \cup [1,m]$ it must be that all the partial sums are distinct.

We see that the ranges of the partial sums of elements in the first row are:

\noindent
$P_1 =\{0\}  \cup [1,m] \cup [16m+16,18m+16]_2  \cup [18m+17, 20m+17]_2 \cup [23m+23,24m+22]$ 
$\cup [26m+25,28m+25]_2 \cup \{26m+26,30m+32\} \cup [32m+33, 34m+31]_2 \cup [36m+36, 37m+36]$ 
$ \cup [39m+39, 40m+38]$. 

\medskip
\noindent
For the second row we get the following ranges:
\begin{center}
\begin{tabular}{|rcl|}

\hline
$P_{2,1}  $&= &$ [40m+46,42m+44]_2 \cup [46m+49,48m+47]_2$ \\
\hline
$P_{2,2}  $&= &$ [2m+4,4m+6]_2 \cup [4m+8,6m+4]_2$ \\
\hline
$P_{2,3} $&= &$ [12m+14,13m+13] \cup [43m+46,44m+45]$ \\
\hline
$P_{2,4}   $&= &$ [27m+30,28m+30] \cup [8m+10,9m+10] \cup \{16m+15,34m+32,30m+30,0\}$ \\
\hline
\end{tabular}
\end{center}

\noindent
Thus the ranges of the partial sums of elements in the second row are:

\noindent
$P_2 = \{0\} \cup [2m+4,4m+6]_2 \cup [4m+8,6m+4]_2 \cup [8m+10,9m+10] \cup [12m+14,13m+13] \cup \{16m + 15\} \cup [27m+30,28m+30] \cup \{30m+30\} \cup \{34m+32\} \cup [40m+46,42m+44]_2 \cup [43m+46,44m+45]
 \cup [46m+49,48m+47]_2$.\\

\noindent
For the third row we have:
\begin{center}
\begin{tabular}{|rcl|} \hline
$P_{3,1}  $&= &$  [1,m] \cup [15m+15,16m+14]$\\
\hline 
$P_{3,2}  $&= &$  [8m+9,9m+9] \cup [19m+22,20m+21]$ \\ 
\hline
$P_{3,3}  $&= &$  [2m+4,3m+3] \cup [25m+27,26m+27]$  \\
\hline
$P_{3,4} $&= &$   [m+2,2m+2] \cup [22m+22,23m+23] \cup \{6m+8,32m+34,0\}$ \\
\hline
\end{tabular}
\end{center}

\noindent
Thus the ranges of the partial sums of elements in the third row are:

\noindent
$P_3  =  \{0\} \cup  [1,m] \cup [m+2,2m+2] \cup [2m+4,3m+3] \cup \{6m+8\} \cup [8m+9,9m+9] \cup [15m+15,16m+14]
\cup [19m+22,20m+21] \cup [22m+22,23m+23] \cup [25m+27,26m+27] \cup \{32m+34\}.$ 

\medskip
Several things are worth noting.  
Notice that each partion of the partial sums covers two disjoint ranges of numbers (some sets contain a few extra numbers). For example, $P_{1,1}$ contains the range $39m+39$ to $40m+38$ and the range $1$ to $m$. This is by design.  Also, within these ranges the sets of partial sums either contain every number in the range, or every other number in the range. Note that any overlap of the sets of partial sums occurs with one set covering the odds and one covering the evens. Therefore one can check by looking at the ranges that the partial sums $P_i$ in each row are distinct. Similar arguments can be used for each case of $n$ modulo $8$. In all subsequent cases we will provide the reader with the original construction, the reordering, and a table of the partial sums. For further details see \cite{M15}.

\bigskip
\noindent
If $\bm{n \equiv 1 \pmod 8, n \geq 9}$: Here $m = \frac{n-9}{8}$ and note that
 all the arithmetic in this case will be in $\ZZ_{48m+55}$. The first five columns are: 
{\small
$$A = \begin{bmatrix}
8m + 7, & 10m + 12 & 16m + 18 &  4m+6 & 4m+3 \\
8m + 10 & 8m + 9 & -12m - 14 & -22m - 26 & 18m + 22  \\
-16m - 17 & -18m - 21&-4m - 4 & 18m + 20 & -22m - 25
\end{bmatrix}.$$
For each $0 \leq r \leq 2m$ define
}
{\small
\begin{center}
$A_r = (-1)^r\begin{bmatrix}
(-8m + 2r - 5), & (-10m - r - 13) & (-24m + r - 27) & (-4m + 2r - 1) \\
(16m - r + 16) & (-4m + 2r - 2) & (8m - 2r + 8) & (-18m - r - 23) \\
(-8m - r - 11) & (14m - r + 15) & (16m + r + 19) & (22m - r + 24)
 \end{bmatrix}.$ \end{center}
}

To construct $H$, begin with $A$ and add on the remaining $n-5$ columns by concatenating the $A_r$ arrays for each value of $r$ between $0$ and $2m$.
Let the reordering be 

\noindent \begin{center}
$R =(8,12,16,...,n-1;3,7,11,15,...,n-2;5,6,10,14,...,n-3;1,9,13,17,...,n,2,4)$. 
\end{center}
 Then
\begin{center}
\begin{tabular}{|rcl|} \hline
$P_{1,1} $&=&$ [24m + 28, 25m +28] \cup [47m + 55, 48m +54]$ \\
\hline
$P_{1,2} $&=&$ [41m+46 , 42m + 46] \cup [30m +33, 31m +33]$ \\
\hline
$P_{1,3} $&=&$ [32m+36 ,34m + 36]_2 \cup [26m + 31, 28m+31]_2$ \\
\hline
$P_{1,4} $&=&$ [34m+38, 36m + 38]_2 \cup [32m+37,34m+37]_2 \cup \{44m+49,0\}$  \\
\hline \end{tabular} \end{center}

\noindent
Thus
$P_1 = \{0\} \cup [24m + 28, 25m +28] \cup [26m + 31, 28m+31]_2 \cup [30m +33, 31m +33] \cup [32m+36 ,34m + 36]_2\cup [32m+37,34m+37]_2 \cup [34m+38, 36m + 38]_2  \cup [41m+46 , 42m + 46] \cup \{44m+49\}$ \\
$\cup [47m + 55, 48m +54]$.\\

\begin{center}
\begin{tabular}{|rcl|} \hline
$P_{2,1} $&=&$ [2,2m] \cup [6m+8,8m+8]$ \\
\hline
$P_{2,2} $&=&$ [38m + 47,40m+47]_2 \cup [40m+49, 42m+49]_2$ \\
\hline
$P_{2,3} $&=&$ [10m+14,11m+14] \cup [25m+30,26m+30]$ \\
\hline
$P_{2,4} $&=&$ [14m+17,15m+17] \cup [33m+40,34m+40] \cup \{22m+26,0\}$ \\
\hline
\end{tabular} \end{center}

\noindent
Thus $P_2 = \{0\} \cup [2,2m] \cup [6m+8,8m+8] \cup [10m+14,11m+14] \cup [14m+17,15m+17] \cup \{22m+26\} \cup
[25m+30,26m+30] \cup [33m+40,34m+40] \cup [38m + 47,40m+47]_2 \cup [40m+49, 42m+49]_2.$ \\

\begin{center}
\begin{tabular}{|rcl|} 
\hline
$P_{3,1} $&=&$ [47m+54, 48m+54] \cup [16m+19,17m+19]$ \\
\hline
$P_{3,2} $&=&$ [13m+15,14m+15] \cup [26m+30,27m+30]$ \\
\hline
$P_{3,3} $&=&$ [43m+49, 44m+49] \cup [4m+5,5m+5]$  \\
\hline
$P_{3,4} $&=&$ [27m+32, 28m+32] \cup [1,m+1] \cup \{30m+35,0\}$ \\
\hline
\end{tabular} \end{center}

\noindent
So $P_3 = \{0\} \cup [1,m+1] \cup [4m+5,5m+5] \cup [13m+15,14m+15] \cup  [16m+19,17m+19]$ \\
$ \cup [26m+30,27m+30] \cup [27m+32, 28m+32] \cup \{30m+35\} \cup [43m+49, 44m+49] \cup [47m+54, 48m+54].  $

\bigskip
\noindent
If $\bm{n \equiv 2 \pmod 8, n \geq 10}$: In this case $m = \frac{n-10}{8}$.
All the arithmetic in this case will be in $\ZZ_{48m+61}$.
 The first six columns are:
{\small
$$A = \begin{bmatrix}
24m + 30 & 16m + 21 & 10m + 13 & 8m + 8 & 4m + 5 & 8m + 9 \\
24m + 29 & -8m - 11 & -10m - 14 & 12m + 16 & 16m + 20 & 12m + 17 \\
2 & -8m - 10 & 1 & -20m - 24 & -20m - 25 & -20m - 26
\end{bmatrix}.$$}
For each $0 \leq r \leq 2m$ define 
{\small
$$A_r = (-1)^r\begin{bmatrix}
(-8m + 2r - 7) & (10m + r + 15) & (-22m + r - 27) & (-8m + 2r - 6) \\
(16m - r + 19) & (4m - 2r + 3) & (4m - 2r + 4) & (-16m - r - 22) \\
(-8m - r - 12)  & (-14m + r - 18) & (18m + r + 23) & (24m - r + 28)
 \end{bmatrix}.$$ }
 
To construct $H$, begin with $A$ and add on the remaining $n-6$ columns by concatenating the $A_r$ arrays for each value of $r$ between $0$ and $2m$. Let the reordering be 

\begin{center}
$R = (10,14,...,n;n-3,n-7,...,7;4,6,8,12,...,n-2;5,9,13,...,n-1,2,3,1).$
\end{center}
Then
\begin{center}
\begin{tabular}{|rcl|}
\hline
$P_{1,1} $&=&$ [40m+55,42m+55]_2 \cup [46m+61,48m+59]_2$ \\
\hline
$P_{1,2} $&=&$  [36m+48,38m+48]_2 \cup [42m+57,44m+55]_2 \cup \{44m+56\}$ \\
\hline
$P_{1,3} $&=&$  [3m+4,4m+4] \cup [14m+19,15m+19]$ \\
\hline
$P_{1,4} $&=&$  [18m+24,19m+24] \cup [45m+58,46m+58] \cup \{14m+18, 24m+31,0\}$ \\ \hline
\end{tabular}\end{center}

\noindent
Thus $P_1 = \{0\} \cup [3m+4,4m+4] \cup \{14m+18\} \cup [14m+19,15m+19] \cup [18m+24,19m+24] \cup \{24m+31\}
 \cup [36m+48,38m+48]_2 \cup [40m+55,42m+55]_2 \cup [42m+57,44m+55]_2 \cup \{44m+56\} \cup$ \\
$ [45m+58,46m+58] \cup [46m+61,48m+59]_2$. \\

\begin{center}
\begin{tabular}{|rcl|}
\hline
$P_{2,1} $&=&$ [1,m] \cup [31m+39,32m+39] $\\
\hline
$P_{2,2} $&=&$ [45m+58,46m+58] \cup [30m+39,31m+38] \cup \{10m+13\} $\\
\hline
$P_{2,3} $&=&$ [22m+30,24m+30]_2 \cup [24m+33,26m+33]_2 $ \\
\hline
$P_{2,4} $&=&$ [40m+53,42m+53]_2 \cup [42m+57,44m+57]_2 \cup \{34m+46, 24m+32,0\}$ \\ \hline
\end{tabular}\end{center}
\noindent
Thus
$P_2 = \{0\} \cup [1,m] \cup \{10m+13\} \cup [22m+30,24m+30]_2 \cup \{24m+32\} \cup [24m+33,26m+33]_2
\cup [30m+39,31m+38] \cup [31m+39,32m+39] \cup \{34m+46\} \cup [40m+53,42m+53]_2$ \\
$ \cup [42m+57,44m+57]_2 \cup [45m+58,46m+58] $.

\begin{center}
\begin{tabular}{|rcl|}
\hline
$P_{3,1} $&=&$ [1,m] \cup [23m+28,24m+28]$\\ 
\hline
$P_{3,2} $&=&$ [13m+16,14m+16] \cup [22m+28,23m+27] \cup \{42m+53\}$\\
\hline
$P_{3,3} $&=&$ [8m+9,9m+9] \cup [21m+27,22m+27] $ \\
\hline
$P_{3,4} $&=&$ [7m+7,8m+7] \cup [36m+45,37m+45] \cup \{48m+58, 48m+59,0\}$ \\
\hline
\end{tabular}\end{center}
\noindent
So $P_3 = \{0\} \cup  [1,m] \cup [7m+7,8m+7] \cup [8m+9,9m+9] \cup [13m+16,14m+16] \cup [21m+27,22m+27] 
\cup [22m+28,23m+27] \cup [23m+28,24m+28] \cup [36m+45,37m+45] \cup \{42m+53\}
 \cup \{48m+58, 48m+59\}.$

\bigskip
\noindent
If $\bm{n \equiv 3 \pmod 8, n \geq 11}$: Define $m = \frac{n-11}{8}$.
All the arithmetic in this case will be in $\ZZ_{48m+67}$.
 The first seven columns are:
{\small  
$$A = \begin{bmatrix}
24m + 33 & 8m + 11 & 8m + 13 & 4m + 6 & 1 & -12m - 17 & 8m + 10 \\
24m + 32 & -16m - 23 & -12m - 18 & 10m + 15 & 20m + 27 & -8m - 9 & 14m + 20 \\
2 & 8m + 12 & 4m + 5 & -14m - 21 & -20m - 28 & 20m + 26 & -22m - 30
\end{bmatrix}.$$ }
For each $0 \leq r \leq 2m$ define
{\small  
$$A_r = (-1)^r\begin{bmatrix}
(-16m + r - 22) & (24m - r + 31) & (4m - 2r + 4) & (-4m + 2r - 3) \\
(8m - 2r + 8) & (-8m + 2r - 7)  & (-22m + r - 29) & (-10m - r - 16) \\
(8m + r + 14)  & (-16m - r - 24) & (18m + r + 25) &  (14m - r + 19)
 \end{bmatrix}.$$}

To construct $H$, begin with $A$ and add on the remaining $n-7$ columns by concatenating the $A_r$ arrays for each value of $r$ between $0$ and $2m$. Let the reordering be 

\begin{center}
$R = (9,13,...,n-2;8,12,...,n-3;1,11,15,..,n;6,7,10,14,..., n-1,5, 2, 3, 4).$ 
\end{center}
Then
\begin{center}
\begin{tabular}{|rcl|}
\hline
$P_{1,1} $&=&$ [1,m] \cup [23m+31,24m+31] $ \\
\hline
$P_{1,2} $&=&$ [7m+9,8m+9] \cup [22m+31,23m+30]$ \\
\hline
$P_{1,3} $&=&$ [28m+39,30m+39]_2 \cup [30m+42,32m+42]_2 $\\
\hline
$P_{1,4} $&=&$ \{18m+22\} \cup [26m+32,28m+32]_2 \cup [28m+38,30m+36]_2 $\\
&& $ \cup
 \{28m+36,28m+37,36m+48,44m+61,0\}$\\
\hline
\end{tabular}\end{center}
\noindent
Thus
$P_1 = \{0\} \cup [1,m] \cup [7m+9,8m+9] \cup \{18m+22\} \cup [22m+31,23m+30] \cup [23m+31,24m+31]
\cup [26m+32,28m+32]_2 \cup \{28m+36, 28m+37\} \cup [28m+38,30m+36]_2 \cup [28m+39,30m+39]_2
\cup [30m+42,32m+42]_2 \cup \{36m+48, 44m+61\}. $

\begin{center}
\begin{tabular}{|rcl|}\hline
$P_{2,1} $&=&$ [40m+60,42m+60]_2 \cup [46m+67,48m+65]_2 $ \\
\hline
$P_{2,2} $&=&$ [42m+62,44m+60]_2 \cup [1,2m+1]_2 $ \\
\hline
$P_{2,3} $&=&$ [13m+17,14m+17] \cup [24m+33,25m+33] $\\
\hline
$P_{2,4} $&=&$ \{5m+8\} \cup [45m+66,46m+66] \cup [18m+28,19m+28]$\\
 && $ \cup \{18m+26,2m+3,38m+52,0\}$ \\
\hline
\end{tabular}\end{center}
\noindent
Thus
$P_2 = \{0\} \cup [1,2m+1]_2 \cup \{2m+3, 5m+8\} \cup [13m+17,14m+17] \cup \{18m+26\} \cup [18m+28,19m+28]
\cup [24m+33,25m+33] \cup \{38m+52\} \cup [40m+60,42m+60]_2 \cup [42m+62,44m+60]_2 \cup
[45m+66,46m+66] \cup  [46m+67,48m+65]_2. $

\begin{center}
\begin{tabular}{|rcl|} \hline
$P_{3,1} $&=&$ [1,m] \cup [31m+43,32m+43]$\\
\hline
$P_{3,2} $&=&$ [30m+43,31m+42] \cup [39m+57,40m+57]$  \\
\hline
$P_{3,3} $&=&$ [40m+59,41m+59] \cup [5m+11,6m+11] $\\
\hline
$P_{3,4} $&=&$ \{25m+37\} \cup [2m+7,3m+7] \cup [21m+32,22m+32]$ \\
&& $\cup \{2m+4,10m+16,14m+21,0\}$ \\
\hline
\end{tabular}\end{center}
\noindent
So $P_3 = \{0\} \cup [1,m] \cup  \{2m+4\} \cup [2m+7,3m+7] \cup [5m+11,6m+11] \cup \{10m+16, 14m+21\}
\cup [21m+32,22m+32] \cup \{25m+37\} \cup [30m+43,31m+42] \cup [31m+43,32m+43]
 \cup [39m+57,40m+57] \cup [40m+59,41m+59] .$\\

\bigskip\noindent
If $\bm{n \equiv 4 \pmod 8, n \geq 12}$: Let $m = \frac{n-12}{8}$.
All the arithmetic in this case will be in $\ZZ_{48m+73}$.
 The first eight columns are: \\

{\small
\resizebox{\linewidth}{!}{%
$A = \begin{bmatrix}
8m + 13 & 10m + 16 & 22m + 34 & -4m - 5 & 4m + 7 & -22m - 35 & -12m - 18 & -1 \\
4m + 6 &  8m + 11 & -4m - 8 & 22m + 33 & -14m - 22 & 4m + 10 & -2 & -20m - 30 \\
-12m - 19 & -18m - 27 & -18m - 26 & -18m - 28 & 10m + 15 & 18m + 25 & 12m + 20 & 20m + 31
\end{bmatrix}.$} }

For $0 \leq r \leq 2m$ define 
{\small
$$A_r = (-1)^r\begin{bmatrix}
(-16m + r - 23) & (-8m + 2r -12) & (14m - r + 21) & (4m - 2r + 3) \\
(8m + r + 14) & (-16m - r - 24) & (-10m - r - 17) & (18m + r + 29) \\
 (8m - 2r + 9) & (24m - r + 36) & (-4m + 2r - 4) & (-22m + r - 32)
 \end{bmatrix}.$$}

To construct $H$, begin with $A$ and add on the remaining $n-8$ columns by concatenating the $A_r$ arrays for each value of $r$ between $0$ and $2m$.  Let the reordering be 

\begin{center}
$R = (9,13,...,n-3;11,15,...,n-1;4,10,14,...,n-2;12,16,...,n,1,2,6,5, 7,8,3).$
\end{center}
Then
\begin{center}
\begin{tabular}{|rcl|}
\hline
$P_{1,1} $&=&$ [32m+50,33m+50] \cup [47m+73,48m+72] $\\
\hline
$P_{1,2} $&=&$ [46m+71,47m+71] \cup [33m+51,34m+50] $ \\
\hline
$P_{1,3} $&=&$ [34m+54,36m+54]_2 \cup [40m+66,42m+66]_2 $ \\
\hline
$P_{1,4} $&=&$ [38m+59,40m+57]_2 \cup [36m+56,38m+54]_2 $\\
&&$\cup \{38m+57,38m+58, 46m+70,8m+13,34m+51,26m+40,26m+39,0\}$ \\
\hline
\end{tabular}\end{center}

\noindent
Thus
$P_1 = \{0, 8m+13, 26m+39,26m+40\} \cup [32m+50,33m+50] \cup [33m+51,34m+50]  \cup \{34m+51\}
 \cup [34m+54,36m+54]_2 \cup [36m+56,38m+54]_2 \cup \{38m+57,38m+58\} \cup [38m+59,40m+57]_2 
\cup [40m+66,42m+66]_2 \cup \{46m+70\} \cup [46m+71,47m+71] \cup [47m+73,48m+72].  $

\begin{center}
\begin{tabular}{|rcl|}
\hline
$P_{2,1} $&=&$ [8m+14,9m+14] \cup [47m+73,48m+72] $\\
\hline
$P_{2,2} $&=&$ [9m+15,10m+14] \cup [46m+70,47m+70] $\\
\hline
$P_{2,3} $&=&$ [3m+6,4m+6] \cup [20m+30,21m+30] $ \\
\hline
$P_{2,4} $&=&$ [2m+6,3m+5] \cup [21m+35,22m+35] $\\
&&$\cup \{26m+41,34m+52,38m+62,24m+40,24m+38, 
4m+8,0\}$ \\
\hline
\end{tabular}\end{center}

\noindent
Thus
$P_2 = \{0\} \cup [2m+6,3m+5] \cup [3m+6,4m+6] \cup \{4m+8\} \cup [8m+14,9m+14] \cup [9m+15,10m+14]
\cup [20m+30,21m+30] \cup [21m+35,22m+35] \cup \{24m+38,24m+40,26m+41,34m+52,38m+62\}
\cup [46m+70,47m+70] \cup  [47m+73,48m+72]. $ 

\begin{center}
\begin{tabular}{|rcl|}
\hline
$P_{3,1} $&=&$ [2,2m]_2 \cup [6m+9,8m+7]_2 $\\
\hline
$P_{3,2} $&=&$ [2m+5,4m+5]_2 \cup [4m+9,6m+7]_2 $ \\
\hline
$P_{3,3} $&=&$ [9m+13,10m+13] \cup [34m+50,35m+50] $\\
\hline
$P_{3,4} $&=&$ [8m+13,9m+12] \cup [35m+54,36m+54] $\\ && $\cup \{24m+35,6m+8,24m+33,34m+48,46m+38,
18m+26,0\}$ \\
\hline
\end{tabular}\end{center}

\noindent
So 
$P_3 = \{0\} \cup [2,2m]_2 \cup [2m+5,4m+5]_2 \cup [4m+9,6m+7]_2 \cup \{6m+8\} \cup [6m+9,8m+7]_2
 \cup [8m+13,9m+12] \cup [9m+13,10m+13] \cup \{18m+26, 24m+33, 24m+35, 34m+48\}
 \cup [34m+50,35m+50] \cup [35m+54,36m+54] \cup \{46m+38\}.$

\bigskip
\noindent
If $\bm{n \equiv 5 \pmod 8, n \geq 5}$: Here $m = \frac{n-5}{8}$. 
All the arithmetic in this case will be in $\ZZ_{48m+31}$.
 The first five columns are: 
{\small
$$A = \begin{bmatrix}
8m + 6 & 10m + 7 & -16m - 10 & -4m - 4 & 4m + 1 \\
-16m - 9 & 8m + 5 & 4m + 2 & -18m - 11 & 18m + 13 \\
8m + 3 & -18m - 12 & 12m + 8 & 22m + 15 & -22m - 14
\end{bmatrix}.$$}
For each $0 \leq r \leq 2m-1$ define
{\small
$$A_r = (-1)^r\begin{bmatrix}
(-8m + 2r - 1) & (-14m + r - 8) & (16m + r + 11) & (4m - 2r - 1) \\
(16m - r + 8) & (4m - 2r) & (8m - 2r + 4) & (18m + r + 14) \\
(-8m - r - 7) & (10m + r + 8) & (-24m + r - 15) & (-22m + r - 13)
 \end{bmatrix}.$$} 
To construct $H$, begin with $A$ and add on the remaining $n-5$ columns by concatenating the $A_r$ arrays for each value of $r$ between $0$ and $2m-1$.  Let the reordering be 

\begin{center}
$R = (9,13,...,n;5,6,10,...,n-3;3,7,11,...,n-2;1,8,12,...,n-1,4,2).$
\end{center}

Then
\begin{center}
\begin{tabular}{|rcl|}
\hline
$P_{1,1} $&=&$ [2,2m]_2 \cup [2m+1,4m-1]_2$ \\
\hline
$P_{1,2} $&=&$ [46m+31,48m+29]_2 \cup [4m+1,6m+1]_2 $ \\
\hline
$P_{1,3} $&=&$ [35m+22,36m+22] \cup [22m+14,23m+13] $ \\
\hline
$P_{1,4} $&=&$ [42m+28,43m+28] \cup [11m+8,12m+7] \cup \{38m+24,0\}$ \\
\hline
\end{tabular}\end{center}

\noindent
Thus
$P_1 = \{0\} \cup [2,2m]_2 \cup [2m+1,4m-1]_2 \cup [4m+1,6m+1]_2 \cup [11m+8,12m+7] \cup [22m+14,23m+13]
 \cup [35m+22,36m+22] \cup \{38m+24\} \cup [42m+28,43m+28] \cup [46m+31,48m+29]_2.$

\begin{center}
\begin{tabular}{|rcl|}
\hline
$P_{2,1} $&=&$ [47m+31,48m+30] \cup [18m+14,19m+13] $ \\
\hline
$P_{2,2} $&=&$ [17m+13,18m+13] \cup [32m+22,33m+21] $\\
\hline
$P_{2,3} $&=&$ [22m+15,24m+15]_2 \cup [24m+17,26m+15]_2 $\\
\hline
$P_{2,4} $&=&$ [8m+6,10m+6]_2 \cup [14m+12,16m+10]_2 \cup \{40m+26,0\}$ \\
\hline
\end{tabular}\end{center}

\noindent
Thus
$P_2 = \{0\} \cup [8m+6,10m+6]_2 \cup [14m+12,16m+10]_2 \cup [17m+13,18m+13] \cup [18m+14,19m+13]
 \cup [22m+15,24m+15]_2 \cup [24m+17,26m+15]_2 \cup [32m+22,33m+21] \cup \{40m+26\}
 \cup [47m+31,48m+30].$

\begin{center}
\begin{tabular}{|rcl|}
\hline
$P_{3,1} $&=&$ [47m+31,48m+30] \cup [26m+18,27m+17] $ \\
\hline
$P_{3,2} $&=&$ [16m+11,17m+10] \cup [25m+17,26m+17] $\\
\hline
$P_{3,3} $&=&$ [2,m+1] \cup [37m+25,38m+25] $ \\
\hline
$P_{3,4} $&=&$ [21m+13,22m+12] \cup [44m+28,45m+28] \cup \{18m+12,0\}$ \\
\hline
\end{tabular}\end{center}

\noindent
So $P_3 = \{0\} \cup [2,m+1] \cup [16m+11,17m+10] \cup \{18m+12\} \cup [21m+13,22m+12]
\cup [25m+17,26m+17]\cup [26m+18,27m+17] \cup [37m+25,38m+25] \cup [44m+28,45m+28]
 \cup [47m+31,48m+30]. $

\bigskip
\noindent
If $\bm{n \equiv 6 \pmod 8, n \geq 6}$: In this case, $m = \frac{n-6}{8}$.
All the arithmetic in this case will be in $\ZZ_{48m+37}$.
The first six columns are:
{\small
$$A = \begin{bmatrix}
24m + 18 & -16m - 13 & -1 & 8m + 4 & -4m - 3 & -8m - 5 \\
2 & 8m + 6 & -10m - 8 & -20m - 14 & -16m - 12 & -12m - 11 \\
24m + 17 & 8m + 7 & 10m + 9 & 12m + 10 & 20m + 15 & 20m + 16
\end{bmatrix}.$$}
For each $0 \leq r \leq 2m-1$ define
{\small
$$A_r = (-1)^r\begin{bmatrix}
(-8m + 2r - 3) & (-4m + 2r - 1) & (-4m + 2r - 2) & (8m - 2r + 2) \\
(16m - r + 11) & (-10m - r - 10) & (22m - r + 16) & (16m + r + 14) \\
(-8m - r - 8) & (14m - r + 11) & (-18m - r - 14) & (-24m + r - 16)
 \end{bmatrix}.$$}

To construct $H$, begin with $A$ and add on the remaining $n-6$ columns by concatenating the $A_r$ arrays for each value of $r$ between $0$ and $2m-1$.  Let the reordering be 

\begin{center}
$R = (10,14,...,n;2,9,13,...,n-1;4,7,11,...,n-3;1,8,12,...,n-2,5,3,6).$
\end{center}

Then
\begin{center}
\begin{tabular}{|rcl|}
\hline
$P_{1,1} $&=&$ [2,2m]_2 \cup [6m+4,8m+2]_2 $ \\
\hline
$P_{1,2} $&=&$ [30m+22,32m+20]_2 \cup [32m+24,34m+24]_2 $ \\
\hline
$P_{1,3} $&=&$ [32m+25,34m+23]_2 \cup [38m+28,40m+28]_2 $ \\
\hline
$P_{1,4} $&=&$ [10m+8,12m+6]_2 \cup [12m+9,14m+9]_2 \cup \{8m+6, 8m+5,0\}$ \\
\hline
\end{tabular}\end{center}

\noindent
Thus $P_1 = \{0\} \cup [2,2m]_2 \cup [6m+4,8m+2]_2 \cup \{8m+5, 8m+6\} \cup [10m+8,12m+6]_2
\cup [12m+9,14m+9]_2 \cup [30m+22,32m+20]_2 \cup [32m+24,34m+24]_2 \cup [32m+25,34m+23]_2
 \cup [38m+28,40m+28]_2. $

\begin{center}
\begin{tabular}{|rcl|}
\hline
$P_{2,1} $&=&$ [16m+14,7m+13] \cup [47m+37,48m+36] $\\
\hline
$P_{2,2} $&=&$ [7m+6,8m+6] \cup [28m+23,29m+22] $\\
\hline
$P_{2,3} $&=&$ [36m+29,37m+29] \cup [3m+4,4m+3]$\\
\hline
$P_{2,4} $&=&$ [26m+22,27m+21] \cup [37m+31,38m+31] \cup \{22m+19, 12m+11,0\} $ \\
\hline
\end{tabular}\end{center}

\noindent
Thus $P_2 = \{0\} \cup [3m+4,4m+3] \cup [7m+6,8m+6] \cup \{12m+11\} \cup [16m+14,7m+13]  \cup \{22m+19\}
 \cup [26m+22,27m+21] \cup [28m+23,29m+22]  \cup [36m+29,37m+29] \cup [37m+31,38m+31]
 \cup [47m+37,48m+36].  $

\begin{center}
\begin{tabular}{|rcl|}
\hline
$P_{3,1} $&=&$ [24m+21,25m+20] \cup [47m+37,48m+36] $ \\
\hline
$P_{3,2} $&=&$ [36m+31,37m+30] \cup [7m+7,8m+7] $ \\
\hline
$P_{3,3} $&=&$ [20m+17,21m+17] \cup [11m+10,12m+9] $ \\
\hline
$P_{3,4} $&=&$ [10m+9,11m+8] \cup [45m+34,46m+34] \cup \{18m+12,28m+21,0\}$ \\
\hline
\end{tabular}\end{center}

\noindent
So $P_3 = \{0\} \cup [7m+7,8m+7] \cup [10m+9,11m+8] \cup [11m+10,12m+9] \cup \{18m+12\}
 \cup  [20m+17,21m+17] \cup [24m+21,25m+20] \cup \{28m+21\} \cup [36m+31,37m+30]
 \cup [45m+34,46m+34] \cup [47m+37,48m+36]. $

\bigskip
\noindent
If $\bm{n \equiv 7 \pmod 8, n \geq 7}$: Now let $m = \frac{n-7}{8}$.
All the arithmetic in this case will be in $\ZZ_{48m+43}$.
The first seven columns are: 
{\small
$$A = \begin{bmatrix}
24m + 21 & 16m + 15 & 4m + 3 & -4m - 4 & -20m - 18 & -12m - 11 & -8m - 6 \\
2 & -8m - 8 & -12m - 12 & 14m + 14 & 1 & 20m + 16 & -14m - 13 \\
24m + 20 & -8m - 7 & 8m + 9 & -10m - 10 &  20m + 17 & -8m - 5 & 22m + 19
\end{bmatrix}.$$} 
For each $0 \leq r \leq 2m-1$ define
{\small
$$A_r = (-1)^r \begin{bmatrix}
(-16m + r - 14) & (-8m + 2r - 3) & (-18m - r - 16) & (4m - 2r + 1) \\
(8m + r + 10) & (-16m - r - 16) & (22m - r + 18) & (10m + r + 11) \\
(8m - 2r + 4) & (24m - r + 19) & (-4m + 2r - 2) & (-14m + r - 12)
\end{bmatrix}.$$}
To construct $H$, begin with $A$ and add on the remaining $n-7$ columns by concatenating the $A_r$ arrays for each value of $r$ between $0$ and $2m-1$.  Let the reordering be 

\begin{center}
$R = (10,14,...,n-1;2,8,12,...,n-3;6,11,15,...,n;7,9,13,...,n-2;4,3 ,1,5).$
\end{center}

Then
\begin{center}
\begin{tabular}{|rcl|}
\hline
$P_{1,1} $&=&$ [1,m] \cup [29m+28,30m+27] $\\
\hline
$P_{1,2} $&=&$ [m+1,2m] \cup [16m+15,17m+15]$ \\
\hline
$P_{1,3} $&=&$ [6m+7,8m+]_2 \cup [4m+4,6m+4]_2 $ \\
\hline
$P_{1,4} $&=&$ [38m+38,40m+36]_2 \cup [44m+41,46m+41]_2 \cup \{40m+37,44m+40,20m+18,0\}$ \\
\hline
\end{tabular}\end{center}

\noindent
Thus $P_1 = \{0\} \cup [1,m] \cup [m+1,2m] \cup [4m+4,6m+4]_2 \cup [6m+7,8m+]_2 \cup [16m+15,17m+15]
 \cup \{20m+18\} \cup [29m+28,30m+27] \cup [38m+38,40m+36]_2 \cup \{40m+37,44m+40\}
\cup [44m+41,46m+41]_2. $

\begin{center}
\begin{tabular}{|rcl|}
\hline
$P_{2,1} $&=&$ [1,m] \cup [21m+19,22m+18]$ \\
\hline
$P_{2,2} $&=&$ [m+2,2m+1] \cup [40m+35,41m+35]$ \\
\hline
$P_{2,3} $&=&$ [11m+8,12m+8] \cup [22m+19,23m+18] $\\
\hline
$P_{2,4} $&=&$ [45m+38,46m+38] \cup [28m+23,29m+22] \cup \{12m+9,48m+40,48m+42,0\}$ \\
\hline
\end{tabular}\end{center}

\noindent
Thus
$P_2 = \{0\} \cup [1,m] \cup [m+2,2m+1] \cup [11m+8,12m+8] \cup \{12m+9\} \cup [21m+19,22m+18]
\cup [22m+19,23m+18] \cup [28m+23,29m+22] \cup [40m+35,41m+35] \cup [45m+38,46m+38]
 \cup \{48m+40,48m+42\}.$ 

\begin{center}
\begin{tabular}{|rcl|}
\hline
$P_{3,1} $&=&$ [46m+43,48m+41]_2 \cup [44m+41,46m+39]_2 $ \\
\hline
$P_{3,2} $&=&$ [44m+42,46m+40]_2 \cup [38m+36,40m+36]_2 $ \\
\hline
$P_{3,3} $&=&$ [18m+19,19m+18] \cup [31m+31,32m+31] $ \\
\hline
$P_{3,4} $&=&$ [5m+7,6m+7] \cup [28m+27,29m+26] \cup \{44m+40,4m+6,28m+26,0\}$ \\
\hline
\end{tabular}\end{center}

\noindent
So $P_3 = \{0\} \cup \{4m+6\} \cup [5m+7,6m+7] \cup [18m+19,19m+18] \cup \{28m+26\} \cup [28m+27,29m+26]
\cup [31m+31,32m+31]  \cup [38m+36,40m+36]_2 \cup \{44m+40\} \cup [44m+41,46m+39]_2
\cup [44m+42,46m+40]_2 \cup[46m+43,48m+41]_2.  $
\end{proof}

Now that we have established that for every $n\geq 3$ there exists  simple row and column orderings for each  Heffter array  $H(3,n)$ from \cite{Ain} we can prove the  main result of this paper.

\begin{thm} For every  $n \geq 3$,
there exists an orientable biembedding of $K_{6n+1}$  such that every edge is on a 3-cycle and a simple $n$-cycle, or equivalently, for every  $n \geq 3$,
there exists an orientable biembedding  of a Steiner triple system and a simple $n-$cycle system, both on $6n+1$ points.
Furthermore, each of the two cycle systems is cyclic modulo $6n+1$.
\end{thm}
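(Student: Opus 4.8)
The plan is to assemble the machinery already in place; the present statement is essentially a corollary of the earlier results. For each fixed $n \geq 3$, Theorem \ref{reord2} furnishes a simple $3 \times n$ Heffter array $H = H(3,n)$: simplicity is exactly what was established there by exhibiting, case by case modulo $8$, a single column reordering for which the three row partial-sum sets $P_1, P_2, P_3$ each consist of distinct values. Since the column Heffter system of any $3 \times n$ array is already simple under the natural top-to-bottom ordering (as noted just before Theorem \ref{reord2}), both the row system $D(6n+1,n)$ and the column system $D(6n+1,3)$ of $H$ are simple. With $m = 3$ odd, the hypothesis of Corollary \ref{heffter-biembed.3xn} is satisfied, so I would invoke it directly to obtain an orientable embedding of $K_{6n+1}$ in which every edge lies on a simple cycle face of size $3$ and a simple cycle face of size $n$.

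Next I would translate this embedding into the language of cycle systems. Every $3$-cycle is automatically simple: for a part $\{a,b,c\}$ with $a+b+c=0$ in $\mathbb{Z}_{6n+1}$, the partial sums $a$, $a+b=-c$, and $0$ are pairwise distinct because $a$, $b$, and $c$ are all nonzero. Hence the triangular faces are simply the triangles of a $3$-cycle system, that is, a Steiner triple system on $6n+1$ points, while the faces of size $n$ form a simple $n$-cycle system on the same point set. Because the two face colours partition $E(K_{6n+1})$ and each edge lies on exactly one face of each size, this is precisely an orientable biembedding of a Steiner triple system and a simple $n$-cycle system, which is the equivalent reformulation in the statement.

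Finally, for the cyclicity claim I would appeal to Proposition \ref{prop2.1}, which guarantees that a simple Heffter system $D(v,k)$ yields a \emph{cyclic} $k$-cycle system. Applying this to the simple systems $D(6n+1,3)$ and $D(6n+1,n)$ read off from the columns and rows of $H$ shows that the resulting $3$-cycle and $n$-cycle systems are each cyclic modulo $6n+1$, since the faces are exactly the $\mathbb{Z}_{6n+1}$-orbits of the base cycles coming from the Heffter parts. I do not expect any real obstacle in this final assembly: the genuine difficulty, namely producing a single simultaneous column reordering that makes all three rows simple for every residue of $n$ modulo $8$, was already overcome in Theorem \ref{reord2}, and the remaining step is only to combine that construction with Archdeacon's embedding framework and Proposition \ref{prop2.1}.
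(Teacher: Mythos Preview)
Your proposal is correct and follows essentially the same route as the paper: invoke Theorem~\ref{reord2} to obtain a simple $H(3,n)$, apply Corollary~\ref{heffter-biembed.3xn} to get the orientable biembedding, and then cite Proposition~\ref{prop2.1} for cyclicity. The additional remarks you make (automatic simplicity of $3$-cycles, the translation into cycle-system language) are sound elaborations but not departures from the paper's argument.
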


\begin{proof}
By Theorem \ref{reord2}, given any $n \in \mathbb Z$ with $n \geq 3$, there exists a $3 \times n$ simple Heffter array.   By Corollary \ref{heffter-biembed.3xn} it follows that
 there exists an embedding of $K_{6n+1}$ on an orientable surface such that every edge is on a simple cycle face of size $3$ and a simple cycle face of size $n$.  From Proposition \ref{prop2.1} we have that each cycle system is cyclic.
\end{proof}

It is interesting to note on which orientable surface we are biembedding. Euler's formula, $V - E + F = 2 - 2g$, can be used to determine the genus of the surface. It is easy to compute that for $K_{6n+1}$; the number of vertices is $V = 6n + 1$, the number of edges is $E = {6n+1 \choose 2}$, and the number of faces is $F = {6n+1 \choose 2} (1/3 + 1/n)$. Substituting these values into Euler's formula we get the following proposition.

\begin{prop}
\label{euler}
For  $n \geq 3$, $K_{6n+1}$ can be biembedded such that every edge is on an $n$-cycle and a 3-cycle on the orientable surface with genus 
$$g =1 - 1/2\Big[6n + 1 + {6n+1 \choose 2}(1/3 + 1/n -1)\Big].$$ 
\end{prop}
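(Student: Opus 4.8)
The plan is to apply Euler's formula directly, so the only real work is to count the vertices, edges, and faces of the orientable embedding guaranteed by the main theorem. The vertex count is immediate, since $K_{6n+1}$ has $V = 6n+1$ vertices, and the edge count is immediate from the complete graph, $E = \binom{6n+1}{2}$. The content of the proposition is therefore a bookkeeping exercise once the face count is established.

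The one step requiring a short argument is the face count, and here I would invoke the defining property of a biembedding recalled in the introduction: the embedding is $2$-colorable, every face is either a $3$-cycle or an $n$-cycle, and every edge lies on exactly one face of each type. Counting edge--face incidences two ways, each triangular face is bounded by $3$ edges while each edge lies on exactly one triangular face, so there are $E/3$ triangular faces; the identical argument with the $n$-cycles gives $E/n$ faces of size $n$. Since these two families are disjoint and exhaust all faces, $F = E\,(1/3 + 1/n) = \binom{6n+1}{2}(1/3 + 1/n)$, which is the quantity asserted.

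With the three counts in hand, I would substitute into Euler's formula $V - E + F = 2 - 2g$ for an orientable surface of genus $g$, solve to get $g = 1 - \tfrac{1}{2}(V - E + F)$, and group the $E$ and $F$ contributions as $\binom{6n+1}{2}(1/3 + 1/n - 1)$ to recover the displayed formula. This is routine arithmetic and requires no case analysis.

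I expect no serious obstacle. The single point that deserves an explicit mention, rather than a genuine difficulty, is that Euler's formula applies only to a cellular embedding, i.e.\ one in which every face is a disk; this is exactly what the biembedding from the preceding theorem provides, since each face is bounded by one of the prescribed simple cycles. Once that observation is recorded, the computation follows immediately.
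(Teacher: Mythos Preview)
Your proposal is correct and follows essentially the same approach as the paper: compute $V=6n+1$, $E=\binom{6n+1}{2}$, and $F=\binom{6n+1}{2}(1/3+1/n)$, then substitute into Euler's formula $V-E+F=2-2g$. The paper presents this as a one-line observation preceding the proposition, and your double-counting justification for $F$ and remark on cellularity simply make explicit what the paper leaves implicit.
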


\begin{example}
Letting  $n = 5$ in  Proposition \ref{euler} above we get
$$g = 1 - 1/2 \Big[31 + {31 \choose 2 }(1/3 + 1/5 -1) \Big] = 1 - 1/2(31 + 465(-7/15)) = 94.$$
So $K_{31}$ can be embedded on an orientable surface with genus 94 such that every edge is on both a 3-cycle and a 5-cycle.
\end{example}

\section{$5 \times n$ Heffter Arrays} \label{section4}
An obvious continuation of the $3 \times n$ result is to ask whether we can use $5 \times n$ Heffter arrays to biembed $K_{10n + 1}$ such that every edge is on both a $5-$ cycle and an $n-$cycle.  Via Theorem \ref{heffter-biembed.thm}, since 5 is odd we have the following corollary.

\begin{corollary}\label{5xn}
If there exists a simple Heffter array $H(5,n)$, then there exists an orientable embedding of $K_{10n+1}$  such that every edge is on a simple cycle face of size $5$ and a simple cycle face of size $n$ 
\end{corollary}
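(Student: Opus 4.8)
The plan is to obtain the statement as an immediate specialization of Theorem \ref{heffter-biembed.thm}, setting $m=5$. That theorem takes any simple Heffter array $H(m,n)$ in which at least one of $m$ and $n$ is odd and produces an orientable embedding of $K_{2mn+1}$ whose faces are precisely simple $m$-cycles and simple $n$-cycles. So the whole task reduces to checking that the hypotheses of Theorem \ref{heffter-biembed.thm} are satisfied for the parameter pair $(5,n)$ and then reading off the conclusion with the numbers substituted.

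First I would verify the parity condition. Since $m=5$ is odd, the requirement that at least one of $m$ and $n$ be odd holds automatically, independently of the parity of $n$; this is the only structural hypothesis of Theorem \ref{heffter-biembed.thm} that could conceivably fail, and it does not. The remaining hypothesis, simplicity of the array, is exactly what the corollary assumes, so nothing additional must be arranged. Then I would substitute $m=5$ into the vertex count, obtaining $2mn+1 = 2\cdot 5\cdot n + 1 = 10n+1$, and into the two face sizes, which become $5$ and $n$. Applying Theorem \ref{heffter-biembed.thm} with these values yields precisely the asserted orientable embedding of $K_{10n+1}$ in which every edge lies on a simple $5$-cycle face and a simple $n$-cycle face.

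There is no genuine obstacle at the level of this corollary: it is a direct instance of the general biembedding theorem, so the argument is a single invocation once the oddness of $5$ is noted. The substantive work lives entirely upstream, in Archdeacon's correspondence (Theorem \ref{bigthm_corr}) together with the compatible-ordering construction (Theorem \ref{compatible}) that combine to give Theorem \ref{heffter-biembed.thm}; and, from the standpoint of turning the corollary into an unconditional result, the real difficulty is the still-open problem of constructing simple $H(5,n)$ for all $n$, which is exactly why the companion existence statement in this section is only partial. Under the conditional phrasing of the corollary, however, that difficulty is hypothesized away and the proof is complete in one step.
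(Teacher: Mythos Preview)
Your proposal is correct and matches the paper's approach exactly: the paper states the corollary as an immediate consequence of Theorem~\ref{heffter-biembed.thm}, noting only that $5$ is odd, and your argument spells out precisely this specialization. There is nothing to add or correct.
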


As was done in the case of the $H(3,n)$ we start with an $H(5,n)$ and rearrange it so that the resulting $H(5,n)$
is simple.  All of the necessary  $H(5,n)$ exist via the following theorem.

\begin{thm}\label {5xn,exist}\cite{Ain}
There exists a $5 \times n$ Heffter array for every $n \geq 3$.
\end{thm}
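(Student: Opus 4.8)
The plan is to give, for every $n \ge 3$, an explicit $5 \times n$ array whose $5n$ entries constitute a half-set of $\mathbb{Z}_{10n+1}$ (so their absolute values are precisely $\{1,2,\ldots,5n\}$, each occurring once) and in which every row and every column sums to $0 \pmod{10n+1}$. Since we only need the sums to vanish modulo $10n+1$ rather than over $\mathbb{Z}$, there is no parity obstruction, and I would mirror exactly the construction of the $3 \times n$ arrays of \cite{Ain} that is reproduced inside the proof of Theorem \ref{reord2}: partition into residue classes of $n$ modulo $8$ (the period forced by using blocks four columns wide), and in each class write the array as a short fixed header $A$ of a few columns concatenated with a family of $5 \times 4$ blocks $A_r = (-1)^r B_r$, $0 \le r \le R$, whose entries are affine functions $\alpha + \beta r$ of the block index. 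The alternating sign $(-1)^r$ and the affine dependence on $r$ are what make the magnitudes sweep through consecutive integers as $r$ grows and what keep the sign pattern coherent from block to block.

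First I would pin down, separately in each residue class of $n \bmod 8$, the header $A$ and the block template $B_r$, choosing all coefficients so that the absolute values occurring across $A, A_0, \ldots, A_R$ pack $\{1,\ldots,5n\}$ with no repetition or omission; as in Section \ref{section3} this reduces to grouping the block entries into a few interleaved arithmetic progressions (intervals of the form $[a,b]$ and $[a,b]_2$) and checking that they tile the target set. Second I would dispose of the column condition, which is block-local: it is enough that each column of $A$ and of every $B_r$ already sums to an exact multiple of $10n+1$ as an integer, since the global $(-1)^r$ factor preserves that. Third I would handle the row condition: for each of the five rows the header contribution together with $\sum_{r=0}^{R} (-1)^r(\text{row } i \text{ of } B_r)$ must vanish modulo $10n+1$, and because the $B_r$ entries are affine in $r$ and alternate in sign these block contributions collapse to a closed-form arithmetic-series expression, so the header entries can be tuned to cancel the residual exactly.

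The main obstacle is reconciling the row condition with the global packing. The column sums are local to each block and are easy to force, but the row sums accumulate a drift that is linear in $r$ across all $R+1$ blocks, and the fixed header must absorb that drift in every row and in every residue class simultaneously, using only the handful of magnitudes that the blocks do not already consume. Meeting both demands at once is delicate and, just as in the $3\times n$ analysis, I expect it to require a separately hand-built header for each class of $n \bmod 8$ (with the few smallest $n$ in each class verified directly), after which the correctness check itself becomes the routine bookkeeping of listing the partial-sum intervals and confirming their sizes fill the available residues. Thus the genuine effort lies in the search for the header-and-block coefficients, not in the final verification.
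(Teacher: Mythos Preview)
The paper does not prove this theorem at all: it is imported verbatim from \cite{Ain}, with no argument given here. So there is nothing in the paper to compare your attempt against.

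Taken on its own, what you have written is a plan rather than a proof. The strategy you describe---a short header $A$ concatenated with sign-alternating $5\times4$ blocks $A_r=(-1)^rB_r$ whose entries are affine in $r$, split into cases by $n\bmod 8$---is exactly the shape of the $3\times n$ construction reproduced inside the proof of Theorem~\ref{reord2}, and it is a reasonable guess that \cite{Ain} proceeds the same way for five rows. But you never produce a single header or block template for any residue class, and you explicitly concede that ``the genuine effort lies in the search for the header-and-block coefficients.'' That search \emph{is} the theorem. Until specific entries are written down and the three verifications (the absolute values tile $\{1,\ldots,5n\}$; each column of $A$ and of each $B_r$ sums to zero; each row sum vanishes modulo $10n+1$) are actually carried out, nothing has been established---a reader could not construct even one $H(5,n)$ from your text. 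To turn this into a proof you must exhibit the explicit arrays in each of the eight residue classes (and handle the small $n$ separately), or else simply cite \cite{Ain} as the paper does.
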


Considering the $H(5,n)$ from \cite{Ain} one can easily verify that the columns are simple in the standard ordering and so again we only need to reorder to rows. However, unlike the $3 \times n$ case, we were unable to do this in general.  In order to obtain a partial result we found reorderings for $3\leq n \leq 100$ using a computer (this was not difficult).
 We again use a single permutation for every row, which in fact reorders the Heffter array by permuting the columns as units. We do not list the computed permuations here; the interested reader can find them in Appendix A of \cite{M15}.

\begin{prop}\label {5xn,100} \cite{M15}
There exists a simple $5 \times n$ Heffter array for every $3 \leq n \leq 100$.
\end{prop}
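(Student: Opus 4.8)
The plan is to reduce the proposition to a finite collection of explicit checks, one for each $n$ with $3 \le n \le 100$, and to obtain the needed arrays by a directed computer search. By Theorem \ref{5xn,exist} an explicit $5 \times n$ Heffter array $H = (h_{ij})$ is available from \cite{Ain} for every such $n$. As in the $3 \times n$ case, I would first confirm that each column of $H$ is already simple under the natural top-to-bottom ordering, so that the only remaining task is to make the five rows simultaneously simple. Following the convention fixed before Example \ref{reord}, the goal is then to find a \emph{single} permutation of the columns of $H$, applied as a block, so that in the resulting array $H'$ every one of the five rows, read left to right, has pairwise distinct partial sums modulo $10n+1$.

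I would organize the search as a depth-first backtracking procedure that builds the column order one column at a time. At each node of the search tree we maintain, for each of the five rows, the set of partial sums realized so far; to extend a partial ordering we select an as-yet-unused column, add its five entries to the five running sums, and test whether any of the five new partial sums collides with a previously recorded sum in the same row. If all five tests pass we recurse; otherwise we reject that column and try the next, backtracking when no column can be appended. Because a collision in any row is detected the instant it occurs, the vast majority of the $n!$ orderings are pruned near the root, and in practice a complete ordering is reached quickly; a local-search refinement that minimizes the total number of within-row coincidences starting from a random permutation is an equally effective alternative.

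The reason such a permutation can be expected to exist in this range is a density heuristic: for a fixed row the $n$ partial sums lie in a group of order $10n+1$, so a typical ordering incurs only about $\binom{n}{2}/(10n+1)$ within-row coincidences; although this is not negligible, the absolute number of orderings that clear all five rows at once remains enormous, so a guided search locates one rapidly. The main obstacle — and precisely why the result is only partial — is that, unlike the uniform reordering supplied by Theorem \ref{reord2} in the three-row case, no closed-form permutation is known that provably simplifies all five rows for general $n$; for that reason the argument is carried out by search rather than by an explicit pattern. Once the search returns a permutation for a given $n$, verification is a routine finite computation: one simply lists the five rows of $H'$ and confirms that each has distinct partial sums modulo $10n+1$. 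The explicit permutations realizing all cases $3 \le n \le 100$ are recorded in Appendix A of \cite{M15}.
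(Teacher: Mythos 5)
Your proposal matches the paper's proof in all essentials: the paper likewise starts from the $H(5,n)$ of \cite{Ain}, observes that the columns are already simple in the standard ordering, and then uses a computer search to find a single column permutation (applied to the array as a whole) making all five rows simple, deferring the explicit permutations for $3 \le n \le 100$ to Appendix A of \cite{M15}. The extra detail you give on the backtracking search and the density heuristic is a reasonable elaboration of the paper's remark that the computation ``was not difficult,'' but it does not change the argument.
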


So via Theorem \ref{5xn,exist} and Proposition  \ref{5xn,100} we have the main result of this section.

\begin{thm}
For every $3 \leq n \leq 100$, there exists an orientable biembedding of $K_{10n+1}$ such that every edge is on a simple cycle face of size $5$ and a simple cycle face of size $n$.
\end{thm}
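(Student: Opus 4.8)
The plan is to assemble this statement directly from the two results immediately preceding it, since all the substantive work has already been carried out there. First I would invoke Proposition~\ref{5xn,100}, which guarantees that for each $n$ in the finite range $3 \leq n \leq 100$ there is a \emph{simple} Heffter array $H(5,n)$, obtained from the array of \cite{Ain} by the single column permutation found by computer search. This is the only ingredient that is specific to the range $3 \leq n \leq 100$, and invoking it is the whole content of the first step.

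Next I would observe that the parity hypothesis needed to apply the embedding machinery is automatically satisfied here. Since $m = 5$ is odd, the condition ``at least one of $m$ and $n$ is odd'' in Theorem~\ref{heffter-biembed.thm} holds for every $n$, and this is precisely why Corollary~\ref{5xn} is stated with no restriction on $n$. Thus for each fixed $n$ in the range, feeding the simple $H(5,n)$ from the previous step into Corollary~\ref{5xn} yields an orientable embedding of $K_{10n+1}$ in which every edge lies on a simple $5$-cycle face and a simple $n$-cycle face. By the definitions in Section~1, such an embedding \emph{is} an orientable biembedding of a $5$-cycle system and an $n$-cycle system on $10n+1$ points, so the theorem follows. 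In this sense there is no hard step in the proof of the stated theorem itself.

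The genuine difficulty — and the reason the result is only partial, covering $3 \leq n \leq 100$ rather than all $n \geq 3$ — lies entirely in establishing Proposition~\ref{5xn,100} for arbitrary $n$, which the present argument sidesteps by a finite search. In the $H(3,n)$ setting this obstacle was overcome by Theorem~\ref{reord2}, which supplied an explicit reordering for every residue class modulo $8$ and then verified simplicity through careful interval-range bookkeeping of the row partial sums. The analogous task for $H(5,n)$ would require exhibiting, for arbitrarily large $n$, a single column permutation of the $H(5,n)$ of \cite{Ain} that simultaneously makes all five row partial-sum sequences simple, together with a distinctness argument of the same flavor but over five rows rather than three. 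This is where I expect the real work to reside; absent such a uniform construction, the computer certification of Proposition~\ref{5xn,100} is what underwrites the finitely many cases recorded in this theorem.
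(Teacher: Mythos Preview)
Your proposal is correct and matches the paper's own argument: the theorem is derived by combining Proposition~\ref{5xn,100} (existence of a simple $H(5,n)$ for $3\le n\le 100$) with Corollary~\ref{5xn} (equivalently Theorem~\ref{heffter-biembed.thm} with $m=5$ odd). Your additional commentary on why the result is only partial is accurate and consistent with the paper's discussion.
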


\section{Conclusion}
In this paper we have shown that for every  $n \geq 3$,
there exists an orientable biembedding  of a Steiner triple system and a simple $n-$cycle system, both on $6n+1$ points.
This paper is the first to exploit the connection which Archdeacon found between Heffter arrays and biembeddings the complete graph on a surface to explicitly biembed a class of graphs.  We considered only the case of biembeddings arising from the existence of Heffter arrays $H(3,n)$ and $H(5,n)$.  In \cite{Ain} Heffter arrays $H(m,n)$ are given for all $m,n \geq 3$.  Hence there is certainly an opportunity to use these for the biembedding problem (if one can find simple orderings of the rows and columns of these arrays).  In addition, a more general definition of Heffter arrays from \cite{A14} leads to biembeddings of other complete graphs (in addition to $K_{2mn+1}$) as well as to biembeddings on  nonorientable surfaces.  More Heffter arrays which could possibly be used to construct biembeddings can be found in the paper \cite{Aincomp}.

\bibliographystyle{unsrt}

\begin{thebibliography}{XX}
\bibitem{A14}
D.S. Archdeacon, Heffter arrays and biembedding graphs on surfaces, {\em Electron. J. Combin.} {\bf 22} (2015), \#P1.74.

\bibitem{Ain}
D.S. Archdeacon, T. Boothby, and J.H. Dinitz, 
Tight Heffter arrays exist for all possible values,
(in preparation).


\bibitem{Aincomp}
D.S. Archdeacon, J.H. Dinitz, D. Donovan, and S. Yazici,
Square integer Heffter arrays with empty cells, {\em Des. Codes Crypt.} (to appear),
(arXiv:1412.8409 [math.CO] 29 Dec 2014).

\bibitem{B04}
 G.K. Bennet, M.J. Grannell, and T.S. Griggs,
 Non-orientable biembeddings of Steiner triple systems of order 15,
 {\em Acta Math. Univ. Comenian. (N.S.)} {\bf 73} (2004), 101--106.


\bibitem{B10}
M. Brown, 
Surface embeddings of families of combinatorial designs, 
Ph.D. Thesis,  University of Vermont, 2010.

\bibitem{F14}
A.D. Forbes, T.S. Griggs, and C. Psomas, J. {\v S}ir{\'a}{\v n},
Biembeddings of Steiner triple systems in orientable pseudosurfaces with one pinch point,
{\em Glasg. Math J.} {\bf 56} (2014), 252--260.

 \bibitem{G0915}
 M.J. Grannell, T.S. Griggs, and M. Knor,
 Orientable biembeddings of Steiner triple systems of order 15,
 {\em J. Combin. Math. Combin. Comput.} {\bf 68} (2009), 129--136.
 
 \bibitem{G04}
 M. J. Grannell and V.P. Korzhik,
 Non-orientable biembeddings of Steiner triple systems,
 {\em Discrete Math} {\bf 285}, (2004), 121--126.

\bibitem{G09M}
M.J. Grannell and V.P. Koorchik,
Orientable biembeddings of cyclic Steiner triple systems from current assignments on M{\"o}bius ladder graphs,
{\em Discrete Math} {\bf 309} (2009), 2847--2860.

\bibitem{GPS.14}
T.S. Griggs, C. Psomas, J. {\v S}ir{\'a}{\v n}, Biembedding Steiner triple systems in surfaces using the Bose construction, {\em J. Combin. Des.} {\bf 23} (2015), 91--100. 


\bibitem{G01}
J. L. Gross and T. W. Tucker,
{\em Topological Graph Theory},
General Publishing Company, Ltd., Toronto, Ontario, 2001.

\bibitem{M15}
A.R.W. Mattern,
Ordering and Reordering: Using Heffter arrays to biembed complete graphs, 
 M.S. Thesis, University of Vermont, 2015. 

\bibitem{M14}
T. A. McCourt,
Biembedding a Steiner triple system with a Hamilton cycle decomposition of a complete graph,
{\em J. Graph Theory} {\bf 77},  (2014), 68--87.

\bibitem{handbook-skolem} N. Shalaby, Skolem and Langford sequences, in Handbook of Combinatorial Designs (2nd ed.), C.J. Colbourn and J.H. Dinitz (Editors), Chapman \& Hall/CRC, Boca Raton, 2007, 612--616. 



\end{thebibliography}

\end{document}